\documentclass[11pt]{amsart}
\usepackage{amssymb,amsmath,amsthm}
\usepackage{graphicx}
\usepackage[final]{hyperref}
\usepackage[a4paper,centering]{geometry}
\usepackage{color}

\newtheorem{theorem}{Theorem}
\newtheorem{proposition}[theorem]{Proposition}
\newtheorem{lemma}[theorem]{Lemma}

\theoremstyle{definition}

\theoremstyle{remark}

\def\R{\mathbb{R}}

\def\e{\varepsilon}
\def\Om{\Omega}

\newcommand{\eigeng}{\lambda_{p,\gamma}}
\DeclareMathOperator{\dive}{div}

\begin{document}

\title[Faber--Krahn for homogeneous normalized $p$-Laplacian type operators]
{Faber--Krahn inequality for homogeneous normalized $p$-Laplacian type operators}

\author[G.~Crasta, I.~Fragal\`a]{Graziano Crasta, Ilaria Fragal\`a}

\address[Graziano Crasta]{Dipartimento di Matematica ``G.\ Castelnuovo'',
Sapienza University of Rome\\
P.le A.\ Moro 5 -- 00185 Roma (Italy)}
\email{graziano.crasta@uniroma1.it}

\address[Ilaria Fragal\`a]{Dipartimento di Matematica, Politecnico di Milano\\
Piazza Leonardo da Vinci, 32 -- 20133 Milano (Italy)}
\email{ilaria.fragala@polimi.it}

\keywords{normalized $p$-Laplacian, principal eigenvalue, Faber--Krahn inequality,
rearrangements, coarea formula, singular and degenerate equations}
\subjclass[2020]{35P15, 35J60, 35J92, 49Q20}

\date{October 16, 2026}

\begin{abstract}
We establish a sharp Faber–Krahn inequality for the normalized (or game-theoretic) $p$-Laplacian, proving that, for $p\ge 2$, the ball uniquely minimizes the principal Dirichlet eigenvalue among bounded connected Lipschitz domains of prescribed volume satisfying a uniform exterior sphere condition. More generally, we prove the same inequality for the family of homogeneous operators $|Du|^\gamma\Delta_p^N u$, with $p>1$ and $-1<\gamma\le p-2$. Unlike the classical $p$-Laplacian, these operators lack a variational structure, preventing the use of standard symmetrization arguments. Our proof overcomes this difficulty through a level-set approach: it relies on a flux identity for the eigenvalue, for which a variational interpretation re-emerges in the special case where the domain is a ball.

\end{abstract}
\maketitle

\section{Introduction and statement of the result}

In 1877, Rayleigh conjectured that, among all membranes of prescribed area,
the disk has the lowest fundamental frequency. 
The conjecture
was proved in 1923 by Faber \cite{Faber1923},  and  independently  two years later by Krahn
\cite{Krahn1925}, who subsequently extended the result to higher
dimensions \cite{Krahn1926}.  Nowadays the inequality is 
named after Faber--Krahn, and   is 
classical for the $p$-Laplacian as well, see e.g.~\cite{Bhattacharya1999}. 
In modern terms it states that,  among bounded open subsets of $\R ^n$ of given $n$-dimensional measure, 
for any $p \in (1, +\infty)$
the first  Dirichlet eigenvalue $\lambda _p$ of  the $p$-Laplacian  
is minimized by balls. 

The most direct proof  relies on the variational nature of the eigenvalue as infimum of the Rayleigh  quotient, 
which makes possible to argue via symmetrization (see \cite{Kawohl,KawohlFridman}):
given a  function 
$u\in W_0^{1,p}(\Omega)$, 
the symmetric decreasing rearrangement  
preserves its
$L ^p$ norm,  while it diminishes that  of $Du$. Hence,   if $B$ is a ball with the same volume as $\Om$, 
 \begin{equation}\label{f:symm} 
\lambda_{p}(\Omega)=
\inf_{u\in W_0^{1,p}(\Omega)}
\frac{\int_\Omega |D u|^p\,dx}
     {\int_\Omega |u|^p\,dx} \geq 
     \inf_{u^*\in W_0^{1,p}(B)}
     \frac{\int_B |D u^* |^p\,dx}
     {\int_B |u^*|^p\,dx} 
 = \lambda_{p}(B) \,.
\end{equation} 
 This variational structure is
precisely what is lost for the normalized, or game-theoretic,
$p$-Laplacian, 
which arises naturally in stochastic
game interpretations and in asymptotic mean-value formulas for $p$-harmonic
functions,  see \cite{PeresSheffield,ManfrediParviainenRossi,KawohlNormalized}
and the references therein.  
 
It is defined by
\begin{equation}\label{f:def-normalized}
\Delta_p^Nu  := \frac{1}{p|Du|^{p-2} } \Delta_pu 
 =\frac1p\Big (\Delta u+(p-2)\big\langle D^2u\frac{Du}{|Du|},\frac{Du}{|Du|}\big \rangle  \Big )\,.
\end{equation}
Since $\Delta_p^N$ is not in divergence form, depends on the direction of the
gradient, and is singular at critical points,   its spectral  
 theory is markedly
different from the variational theory of $\Delta_p$. 
In particular, eigenfunctions must be understood in the  viscosity sense. 
In  our joint work
\cite{CrastaFragalaKawohl} with B.~Kawohl, we established the 
simplicity of the first Dirichlet eigenvalue of $\Delta_p^N$  and a
non-sharp lower bound in terms of the measure of the domain, 
leaving the corresponding Faber--Krahn inequality as an open problem.

The aim of this work is to attack this problem, adopting a more general perspective. In fact, we consider a wider family of homogeneous operators
which has attracted increasing attention in recent years 
\cite{ImbertSilvestre2013,AttouchiRuosteenoja,WangYinJiang2025,AlcantaraSantos2026,AlcantaraSantosUrbano2026}

For any  $p\in (1, + \infty)$  and any $\gamma > -1$, we consider
the  Dirichlet eigenvalue problem
\begin{equation}\label{f:eigen-normalized}
-|Du|^\gamma\Delta_p^N u
=\eigeng(\Omega)|u|^{\gamma} u 
\quad\hbox{in }\Omega,
\qquad
u=0\quad\hbox{on }\partial\Omega \,,
\end{equation}
where the power $|u|^{\gamma} u $  is dictated by homogeneity.

The differential operator in \eqref{f:eigen-normalized} corresponds to $\Delta _p$ and  to $\Delta _p ^N$  
for $\gamma=p-2$ and 
$\gamma=0$ respectively.  For any $\gamma > -1$, it
falls into the setting of fully nonlinear homogeneous operators studied by Birindelli and Demengel 
in \cite{BirindelliDemengel2007,BirindelliDemengel2009} and in our previous paper
\cite{CrastaFragala2020}.
The precise definitions of viscosity solution and  principal-eigenvalue are postponed for convenience to Section~\ref{s:preliminaries}. 
Here 
we limit ourselves to recall  that, by  \cite[Theorem 19]{CrastaFragala2020},   problem
\eqref{f:eigen-normalized} admits  a positive
principal eigenfunction  continuous up to $\partial\Omega$, 
 provided $\Omega$ belongs to the class 
 \begin{equation}
\label{f:sets}
\mathcal A_n:= \left\{ 
\begin{matrix}
\text{open bounded connected Lipschitz domains of $\R^n$} 
\\ 
\text{satisfying a uniform exterior sphere condition}
\end{matrix}
\right\}\,.
\end{equation}
Thus, under these regularity assumptions on $\Omega$, asking  whether $\lambda _{p, \gamma}$ 
satisfies a Faber-Krahn type inequality is a meaningful spectral  question for a
non-variational viscosity operator. 

Our main result is the following: 

\begin{theorem}[Faber--Krahn inequality]\label{t:FK-gamma}
Let $n\ge2$ and $p\in (1, +\infty)$.  Assume that
\[
-1<\gamma\le p-2\,.
\]
 
For any $\Omega\in\mathcal A_n$, if $\Omega^*$ is  a ball such that
$|\Omega|=|\Omega^*|$,
then
\begin{equation}\label{f:FK-main}
\eigeng(\Omega)\ge \eigeng(\Omega ^*)\,
\end{equation}
with equality if and only if $\Omega= \Omega ^*$ (up to rigid motions).
\end{theorem}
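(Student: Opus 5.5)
We propose a level-set (Talenti-type) comparison that avoids the missing Rayleigh quotient. Let $u>0$ be the principal eigenfunction on $\Omega$ given by \cite[Theorem 19]{CrastaFragala2020}, normalized with $\max u=1$, and set $\lambda=\eigeng(\Omega)$. For $0<t<1$ write $\Omega_t=\{u>t\}$ and $\mu(t)=|\Omega_t|$. Formally, on regular level sets the equation reads
\[
-|Du|^{\gamma+2-p}\,\tfrac1p\,\Delta_p u=\lambda u^{\gamma+1}.
\]
Dividing by $|Du|^{\gamma+2-p}$ and integrating over $\Omega_t$ gives the flux identity
\[
\frac1p\int_{\{u=t\}}|Du|^{p-1}\,d\mathcal H^{n-1}=\lambda\int_{\Omega_t}u^{\gamma+1}|Du|^{p-2-\gamma}\,dx .
\]
This is the identity referred to in the abstract.

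\textbf{Step 1: rigorous flux identity.} Since $u$ is only a viscosity solution, we first justify the identity for a.e.\ $t$. We would use interior $C^{1,\alpha}$ regularity for this class of operators, together with $C^2$ (indeed analytic) regularity away from the critical set. The critical set should be negligible for the coarea formula; Sard-type arguments combined with the divergence form of $\Delta_p u$ on $\{Du\neq0\}$ should give this. Alternatively, we would test the equation against truncations $\phi_\varepsilon(u)$ and pass to the limit. The factor $|Du|^{\gamma+2-p}$ is where the hypothesis $\gamma\le p-2$ enters: it makes $|Du|^{p-2-\gamma}$ bounded, so the right-hand side is well defined and the weak formulation of $\Delta_p u$ is legitimate. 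I expect this justification, especially at critical points and near $\partial\Omega$, to be the main technical obstacle. The uniform exterior sphere condition supplies the boundary gradient bounds needed there.

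\textbf{Step 2: rearrangement inequality.} Apply the coarea formula and write $-\mu'(t)\ge\int_{\{u=t\}}|Du|^{-1}$. Combining H\"older with the isoperimetric inequality $P(\Omega_t)\ge n\omega_n^{1/n}\mu(t)^{(n-1)/n}$ gives
\[
P(\Omega_t)^{p/(p-1)}\le\Big(\int_{\{u=t\}}|Du|^{p-1}\Big)^{1/(p-1)}(-\mu'(t)),
\]
and hence a differential inequality. The right-hand side of the flux identity, $\lambda\int_{\Omega_t}u^{\gamma+1}|Du|^{p-2-\gamma}$, must also be bounded by one-dimensional quantities. By coarea again it equals $\lambda\int_t^1 s^{\gamma+1}\int_{\{u=s\}}|Du|^{p-3-\gamma}\,ds$. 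Hölder with the exponent pair determined by $p-3-\gamma\in[-1,p-1)$ controls the inner integral by $(\int|Du|^{p-1})^{\theta}(\int|Du|^{-1})^{1-\theta}$.

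\textbf{Step 3: comparison with the ball.} Setting $F(t)=\int_{\{u=t\}}|Du|^{p-1}$, these estimates yield an integro-differential inequality for the pair $(\mu,F)$ in which each inequality becomes an equality for radial functions on a ball. On $\Omega^*$ the eigenfunction is radial, so the one-dimensional problem is an ODE. There the eigenvalue has a variational characterization, namely a weighted Rayleigh quotient in the radial variable; this is the ``variational interpretation that re-emerges for the ball.'' Plugging the radially symmetrized profile $u^\star$ (built from $\mu$) into this quotient, and using the inequality from Step 2, should show $\lambda\ge\eigeng(\Omega^*)$. If the pair cannot be handled directly, the fallback is to construct from $\mu$ a radial test function whose quotient is bounded by $\lambda$.

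\textbf{Step 4: equality case.} If equality holds in \eqref{f:FK-main}, then equality must hold in the isoperimetric inequality for a.e.\ $t$, so almost every $\Omega_t$ is a ball. Equality in H\"older forces $|Du|$ to be constant on each level set. The nested balls must then be concentric, since $u$ is $C^1$ and the limits are forced. Since $\Omega=\bigcup_t\Omega_t$ up to a null set and $\Omega$ is Lipschitz, this gives $\Omega=\Omega^*$ up to translation.
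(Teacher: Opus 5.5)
Your outline runs parallel to the paper's: a flux identity from truncated test functions, coarea, H\"older and isoperimetry on level sets, a weighted radial Rayleigh quotient on the ball, the Schwarz profile as a test function, and rigidity from isoperimetric equality. However, the step that actually produces the inequality is missing, and it does not follow by ``plugging in''.

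Write $\lambda=\lambda_{p,\gamma}(\Omega)$, $m=(\gamma+2)/p$, $Q(t)=\int_{\{u=t,\,|Du|>0\}}|Du|^{-1}\,d\mathcal H^{n-1}$, and let $v$ be the radial profile of $u^*$. Your Step 2 yields a lower bound $F(v(r))\ge n\omega_n r^{n-1}(-v'(r))^{p-1}$. It also yields an upper bound $F(t)\le p\lambda\int_t^1 s^{\gamma+1}F(s)^{1-m}Q(s)^m\,ds$. Since $F$ reappears on the right with the positive power $1-m$, the lower bound cannot be inserted, so the system for $(\mu,F)$ is not closed.

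The paper closes it with a Bihari-type device. For $Y=F^m$ one has $-Y'\le(\gamma+2)\lambda t^{\gamma+1}(-\mu')^m$. Hence, using $F(1^-)=0$, $Y(t)\le(\gamma+2)\lambda\int_t^1\tau^{\gamma+1}(-\mu')^m\,d\tau$. Passing to the radial variable requires a change of variables that remains valid although $\mu$ need not be absolutely continuous. It gives
\[
r^{m(n-1)}(-v')^{m(p-1)}\le(\gamma+2)\lambda\, I(r),\qquad I(r)=\int_0^r\rho^{m(n-1)}v^{\gamma+1}(-v')^{1-m}\,d\rho ,
\]
which still carries $(-v')^{1-m}$ inside $I$. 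A second Bihari step, applied to $J=I^{\delta/(m(n-1))}$ and using $\frac{\delta}{m(n-1)}+\frac{1-m}{m(p-1)}=1$, is what produces
\[
r^\delta(-v')^{\gamma+1}\le p'(\gamma+1)\lambda\int_0^r\rho^\delta v^{\gamma+1}\,d\rho,\qquad \delta=\frac{(n-1)(\gamma+1)}{p-1}.
\]
This is the form that feeds the weighted Rayleigh quotient: multiply by $-v'$ and integrate by parts.

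Relatedly, you never identify that quotient. The ball ODE $-(r^{n-1}|z'|^{p-2}z')'=p\lambda r^{n-1}z^{\gamma+1}|z'|^q$ is not in Euler--Lagrange form, since its right-hand side contains $|z'|^q$. Only after raising the radial flux to the power $(\gamma+1)/(p-1)$ does it become $-(r^\delta|z'|^\gamma z')'=\Lambda r^\delta z^{\gamma+1}$, with $\Lambda=p'(\gamma+1)\lambda_{p,\gamma}(B_R)$. One must then still prove, e.g.\ via Picone, that $\Lambda=\min\int r^\delta|\phi'|^{\gamma+2}/\int r^\delta|\phi|^{\gamma+2}$ and that $z$ is the unique positive minimizer.

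There are also smaller issues.
\begin{itemize}
\item Sard's theorem is not available at $C^{1,\alpha}$ regularity, and it is not needed. The paper never shows the critical set is negligible; it works on $\{|Du|>0\}$ and uses $Q(t)\le-\mu'(t)$, which always holds.
\item No boundary gradient bounds are needed. The truncations are supported in $\{u\ge t\}\Subset\Omega$, and $u\in W^{1,p}_0(\Omega)$ follows by testing with $(u-\varepsilon)_+$. The exterior sphere condition enters only through existence and boundary continuity.
\item The passage from viscosity to weak solutions needs an argument. The paper obtains it from $C^{1,\alpha}$ regularity plus a viscosity-implies-weak theorem for $-\Delta_p u=f$.
\item At $\gamma=p-2$, your coarea rewriting of $\int_{\Omega_t}u^{p-1}$ loses the critical set; use equimeasurability instead.
\item In the equality case, H\"older equality and concentricity are neither needed nor justified as written. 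What is needed is:
  \begin{itemize}
  \item uniqueness of the positive radial minimizer, so that $v=z$ with $z'<0$ everywhere;
  \item back-propagation of equality through the chain above;
  \item property (N) of $v$, to pass from a.e.\ $r$ to a.e.\ $t$;
  \item for the nested balls $B(c_j,r_j)$, the bound $|c_k-c_j|\le r_k-r_j$ with $r_j\uparrow R$, followed by a density-point argument using that $\Omega$ is Lipschitz.
  \end{itemize}
\end{itemize}
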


Notice that, for $\gamma=0$, the theorem settles the Faber--Krahn problem for the normalized $p$-Laplacian
posed in \cite{CrastaFragalaKawohl} in the range $p\ge2$, while the endpoint $\gamma=p-2$ is already contained in the classical variational
Faber--Krahn theorem for the $p$-Laplacian,  the genuinely new case being  
the strict range $-1<\gamma<p-2$.

Faber--Krahn type results for fully nonlinear
elliptic operators are comparatively scarce.
Among the few contributions in this direction,
Birindelli and Leoni \cite{BirindelliLeoni2014}
proved that symmetry minimizes 
the principal eigenvalue of the Pucci
supremum operator within a special class of
planar domains. Our result provides instead a Faber--Krahn
inequality, including rigidity in the equality
case, for a family of homogeneous fully
nonlinear operators on bounded connected Lipschitz domains
satisfying a uniform exterior sphere
condition, without relying on a
global variational characterization of the
principal eigenvalue. 

In this respect, a relevant precedent is provided by the work of Hamel, Nadirashvili and Russ \cite{HamelNadirashviliRuss2007,HamelNadirashviliRuss2011}, who established isoperimetric inequalities for principal eigenvalues of linear second-order elliptic operators in divergence form, with possibly non-symmetric coefficients and lower-order terms, without relying on a Rayleigh quotient characterization. Their method is based on an ad hoc rearrangement of the principal eigenfunction and of the coefficients of the operator. Our approach is different, although it shares with theirs the use of level-set geometry and flux integrals associated with the principal eigenfunction to obtain a radial comparison. 

Let us briefly explain the mechanism of our proof. The starting point is the observation that a Rayleigh quotient characterization of $\lambda_{p,\gamma}$ survives in the case of balls. Indeed, the ordinary differential equation satisfied by the principal eigenfunction of the ball $B_R$ is precisely the Euler–Lagrange equation of the minimization problem
\begin{equation}\label{f:var} 
p' (\gamma+1)  \eigeng(B_R)
= \inf_{\substack{\phi\not\equiv0\\ \phi(R)=0}}
\frac{\displaystyle\int_0^Rr^\delta|\phi'|^{\gamma+2}\,dr}
{\displaystyle\int_0^Rr^\delta|\phi|^{\gamma+2}\,dr}\,, \qquad \text{ with } \delta = \frac{(n-1)(\gamma+1)}{p-1 }\,. 
\end{equation} 
The power-type weight $r^\delta$ emerging in this radial problem
can be viewed as corresponding to the effective radial dimension  $N_{\mathrm{eff}} = 1 + \delta $ (which equals $n$ at the endpoint $\gamma=p-2$).

In view of \eqref{f:var}, in order to prove the Faber-Krahn inequality for  $\eigeng$, it is enough 
to show that the radial profile $v = v ( r)$ of the symmetric decreasing rearrangement of the first eigenfunction $u$ of $\Om$  satisfies 
\begin{equation}\label{f:required}  p' (\gamma+1)  \eigeng(\Omega) \geq 
\frac{\displaystyle\int_0^Rr^\delta|v'|^{\gamma+2}\,dr}
{\displaystyle\int_0^Rr^\delta|v|^{\gamma+2}\,dr} \,.
 \end{equation}  
Indeed, the inequality $\eigeng(\Omega)  \geq \eigeng(B_R)$ follows from \eqref{f:var} and \eqref{f:required}, simply by taking  $v$  
as a test function in the  one-dimensional radial problem.  

To prove \eqref{f:required}, we exploit a  weak formulation of the eigenvalue problem due to Attouchi and Ruosteenoja 
\cite{AttouchiRuosteenoja} (see Proposition 
\ref{p:weak-formulation} below). 
Testing it with truncations of the eigenfunction yields a  representation formula for the first eigenvalue of $\Om$ in terms of the level sets of  its eigenfunction $u$: 
 \begin{equation}\label{f:evaluate}  
\int_{ \{ u = t \} }|Du|^{p-1}\,d\mathcal H^{n-1} 
=p\eigeng(\Omega)
\int_{\{ u > t \} }u^{\gamma+1}|Du|^{p - 2 - \gamma } \,dx \qquad \text{ for a.e. } t\in (0, 1)\, .
\end{equation}  
Using this level-set representation, we derive a differential inequality for $v$
which in turn yields \eqref{f:required}, by analyzing the flux functional
 $$F(t):= \int_{ \{ u = t \} }|Du|^{p-1}\,d\mathcal H^{n-1}  \,.$$ 
More precisely, we combine a geometric lower bound for $F$, obtained via the application of the
 coarea formula and the isoperimetric inequality, with  a 
differential estimate for $F$, obtained through a careful analysis of
the generalized inverse of the distribution function
$|\{u>t\}|$.  The assumption $\gamma\le p-2$ is motivated precisely by the interpolations used in this
step.  We point out that, at the endpoint $\gamma=p-2$, the geometric lower bound
alone is sufficient. In this case, the argument follows the idea underlying
Krahn's original proof, as revisited in \cite{Daners2011}, and appearing
in later developments, including Talenti's seminal work \cite{Talenti1976}
and Daners' approach to Robin problems \cite{Daners2006, BucurDaners2010}.

 Finally, if equality holds, all inequalities involved must become
equalities.  In particular, equality in the isoperimetric inequality forces almost
every positive superlevel set of $u$ to be a ball, and nestedness then yields
the rigidity statement for $\Omega$.

The paper is organized as follows. Section~\ref{s:preliminaries}
collects some preliminaries on viscosity solutions to the
eigenvalue problem and their weak reformulation, together with
some useful properties of generalized inverse functions.
In Section~\ref{s:1d}, we explain the key reduction of the
Faber--Krahn inequality to a differential inequality for the
Schwarz rearrangement of the principal eigenfunction.
Section~\ref{s:proof-radial} is entirely devoted to the proof
of this inequality. Finally, Section~\ref{s:rigidity} deals
with the equality case and establishes the uniqueness of
balls as minimizers.

\section{Preliminaries}\label{s:preliminaries}

\subsection{On viscosity and weak solutions to the eigenvalue problem }\label{s:weak}

The differential operator $-|Du|^\gamma\Delta_p^N u$ appearing in 
\eqref{f:eigen-normalized} can be written as
\begin{equation}\label{f:eigen-Fgamma}
-|Du|^\gamma\Delta_p^N u = \mathcal F_{p,\gamma}(Du,D^2u)
\end{equation}
where, 
for every $\xi\in\mathbb R^n\setminus\{0\}$ and every symmetric $(n\times n)$ matrix $X$, 
\begin{equation}\label{f:def-Fgamma}
\mathcal F_{p,\gamma}(\xi,X)
:=-\frac1p|\xi|^\gamma
\Big(\operatorname{tr}X+(p-2)
\big\langle X\frac{\xi}{|\xi|},\frac{\xi}{|\xi|}\big\rangle\Big).
\end{equation}

  By a positive
Dirichlet eigenfunction of $\mathcal F_{p,\gamma}$ associated with $\lambda$ we mean a function
\[
u\in C(\overline\Omega),\qquad u>0\ \hbox{in }\Omega,
\qquad u=0\ \hbox{on }\partial\Omega,
\]
which solves
\begin{equation}\label{f:pb}
\mathcal F_{p,\gamma}(Du,D^2u)=\lambda u^{\gamma+1}
\quad\hbox{in }\Omega
\end{equation}

in the following viscosity sense.

For $\gamma\ge0$, we adopt the standard
Crandall--Ishii--Lions definition
\cite{CrandallIshiiLions}, based on the
semicontinuous envelopes of the operator
defined in \eqref{f:def-Fgamma}.
More precisely, an upper semicontinuous
function $u$ is a viscosity subsolution
of \eqref{f:pb} if, whenever
$\phi\in C^2(\Omega)$ touches $u$ from
above at $x_0\in\Omega$, one has
\[
(\mathcal F_{p,\gamma})_*
(D\phi(x_0),D^2\phi(x_0))
\le \lambda u(x_0)^{\gamma+1}.
\]
Supersolutions are defined analogously,
using test functions touching from below
and the reverse inequality for the upper
semicontinuous envelope
$(\mathcal F_{p,\gamma})^*$.
The envelopes are taken jointly with
respect to the gradient and Hessian
variables. For $\gamma>0$, the operator
extends continuously to $\xi=0$ by
setting $\mathcal F_{p,\gamma}(0,X)=0$;
for $\gamma=0$, it has a bounded
discontinuity at $\xi=0$.

For $-1<\gamma<0$, we adopt instead
the Birindelli--Demengel viscosity
definition
\cite{BirindelliDemengel2007,
BirindelliDemengel2009}.

According to this definition, a lower
semicontinuous function $u$ is a viscosity
supersolution of \eqref{f:pb} if the
following conditions hold at every
$x_0\in\Omega$:
if $u$ is constant with value $c$ in
a neighborhood of $x_0$, then
\[
0\ge \lambda c^{\gamma+1};
\]
otherwise, every $C^2$ test function
$\phi$ touching $u$ from below at $x_0$,
with $D\phi(x_0)\ne0$, satisfies
\[
\mathcal F_{p,\gamma}
(D\phi(x_0),D^2\phi(x_0))
\ge \lambda u(x_0)^{\gamma+1}.
\]
Subsolutions are defined analogously,
using test functions touching from above
and reversing the inequalities.

The equivalence of this definition
with the singular viscosity definition
based on test functions with isolated
critical points is established in
\cite[Proposition~2.4]{AttouchiRuosteenoja}.

  The principal eigenvalue of $\mathcal F _{p,\gamma}$  is understood in the sense of \cite{BirindelliDemengel2007,BirindelliDemengel2009}, namely 
\[
\lambda _{p, \gamma} (\Omega):= \sup \Big \{ \lambda \in \R \ :\ \exists u >0 \text{ in } \Omega \text{ viscosity super-solution to the pde in } \eqref{f:pb}\Big \} \,.
\]
 Then, 
  the existence of a
positive eigenfunction  on the class
$\mathcal A_n$, along with its boundary continuity,
follows from \cite[Theorem 19 and Proposition 17]{CrastaFragala2020}.  

Since the equation is homogeneous in
$u$, from now on we normalize the principal eigenfunction by
\begin{equation}\label{f:max-normalization}
\max_{\overline\Omega}u=1.
\end{equation}

In the strict range $-1<\gamma<p-2$,
at every point where $Du\ne0$, the identity
\[
\Delta_pu
=\dive(|Du|^{p-2}Du)
=p|Du|^{p-2}\Delta_p^N u 
\]
shows that, setting
\begin{equation}\label{f:def-q}
q:=p-2-\gamma,
\end{equation}
equation
 \eqref{f:eigen-normalized} is equivalent to
\begin{equation}\label{f:weak-equation-formal}
-\Delta_pu
=p\eigeng(\Omega)u^{\gamma+1}|Du|^q\,.
\end{equation}

We next establish the weak formulation
of \eqref{f:eigen-normalized} throughout
the range $-1<\gamma\le p-2$.
The proof relies on the viscosity
results in
\cite[Proposition~2.4 and Section~3.3]
{AttouchiRuosteenoja}
and the viscosity-to-weak implication
in \cite[Theorem~1.4]{MedinaOchoa}.

\begin{proposition}[Weak formulation]\label{p:weak-formulation}
Let $-1 < \gamma \leq p-2$. Then the positive principal eigenfunction $u$ in
\eqref{f:eigen-normalized} 
is a weak solution of \eqref{f:weak-equation-formal}, 
in the sense that $u \in W ^ { 1, p}_{\rm{loc}} (\Omega)$ and 
\begin{equation}\label{f:weak-formulation}
\int_\Omega |Du|^{p-2}Du\cdot D\varphi\,dx
=p\eigeng(\Omega)
\int_\Omega u^{\gamma+1}|Du|^q\varphi\,dx
\end{equation}
for every  $\varphi \in C ^ \infty _c (\Omega)$ (and then by density also for every 
$\varphi\in W^{1,p}(\Omega)\cap L ^ \infty (\Omega)$ with support compactly contained into $\Omega$). 
\end{proposition}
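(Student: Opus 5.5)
The plan is to pass from the viscosity formulation of \eqref{f:eigen-normalized} to the divergence-form equation \eqref{f:weak-equation-formal}. We first use the viscosity results of \cite{AttouchiRuosteenoja} to show that $u$ is a viscosity solution of \eqref{f:weak-equation-formal}. We then apply the viscosity-to-weak implication of \cite[Theorem~1.4]{MedinaOchoa} to the $p$-Laplacian with a gradient-dependent right-hand side. The endpoint $\gamma=p-2$ is handled separately: there the operator is $-\frac1p\Delta_p$ itself and $q=0$, so \eqref{f:weak-equation-formal} is just the $p$-Laplace eigenvalue equation with right-hand side $p\lambda u^{p-1}\in C(\overline\Omega)$. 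In that case the classical equivalence of viscosity and weak solutions for $\Delta_p$ (Juutinen--Lindqvist--Manfredi type) gives $u\in W^{1,p}_{\rm loc}$ and \eqref{f:weak-formulation} at once.

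For $-1<\gamma<p-2$, the first step is to show that $u$ is a viscosity solution of $-\Delta_p u = p\lambda u^{\gamma+1}|Du|^q$, with $\lambda=\eigeng(\Omega)$, in the singular sense. Here we use the Birindelli--Demengel definition, or equivalently, by \cite[Proposition~2.4]{AttouchiRuosteenoja}, test functions with isolated critical points.

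At a touching point with $D\phi(x_0)\neq0$, the algebraic identity $\Delta_p\phi=p|D\phi|^{p-2}\Delta_p^N\phi$ converts the inequality for $\mathcal F_{p,\gamma}$ into the corresponding one for \eqref{f:weak-equation-formal}. To do so we multiply by $p|D\phi|^{p-2-\gamma}>0$, which preserves the sign of the inequality.

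At touching points with vanishing gradient, or where $u$ is locally constant, we invoke the analysis in \cite[Section~3.3]{AttouchiRuosteenoja}. It shows that such points impose no additional constraint, because the right-hand side $u^{\gamma+1}|Du|^q$ vanishes there since $q>0$. The locally constant case is in fact excluded, since $u>0$ and $\lambda>0$ would give $0\ge\lambda c^{\gamma+1}>0$.

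The second step is to check the hypotheses of \cite[Theorem~1.4]{MedinaOchoa}. That theorem asserts that viscosity solutions of $-\Delta_p u=f(x,u,Du)$ are weak solutions, provided the right-hand side $f(x,s,\xi)=p\lambda s^{\gamma+1}|\xi|^q$ satisfies suitable continuity and growth conditions. Here $f$ is continuous, and on the range of $u$, namely $0\le s\le 1$ by \eqref{f:max-normalization}, it grows like $|\xi|^q$ with $q<p-1$. It is therefore subcritical with respect to the natural growth $|\xi|^{p}$, or $|\xi|^{p-1}$, depending on the version of the theorem. Local boundedness of $u$ is given, and local Lipschitz or $C^{1,\alpha}$ regularity of $u$ from \cite{AttouchiRuosteenoja} can be used if the theorem needs it. The conclusion is $u\in W^{1,p}_{\rm loc}(\Omega)$ together with \eqref{f:weak-formulation} for $\varphi\in C^\infty_c(\Omega)$.

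The final step is the density extension. Take $\varphi\in W^{1,p}(\Omega)\cap L^\infty$ with $\operatorname{supp}\varphi\Subset\Omega$, and mollify it to get $\varphi_\e$. These functions are supported in a fixed compact set $K$, bounded uniformly in $L^\infty$, converge to $\varphi$ in $W^{1,p}$, and converge a.e. along a subsequence. The left-hand side of \eqref{f:weak-formulation} converges because $|Du|^{p-1}\in L^{p'}(K)$. The right-hand side converges by dominated convergence, since $u^{\gamma+1}|Du|^q\in L^1(K)$ (indeed $q<p$) and the $\varphi_\e$ are uniformly bounded.

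The main obstacle is the first step at the critical points of $u$, that is, confirming that the singular viscosity notion for $\mathcal F_{p,\gamma}$ transfers exactly to the notion required in \cite{MedinaOchoa}. That notion is presumably the standard one for $\Delta_p$ with continuous right-hand side, including test functions with $D\phi(x_0)=0$. When $D\phi(x_0)=0$, we would first try to reduce to test functions with isolated critical points, using the equivalence in \cite[Proposition~2.4]{AttouchiRuosteenoja}. We would then use the fact that the right-hand side vanishes at such points to handle the envelope of $-\Delta_p$.
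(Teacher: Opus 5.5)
Your route is the paper's: pass through \cite{AttouchiRuosteenoja} to the viscosity form of \eqref{f:weak-equation-formal}, then apply \cite[Theorem~1.4]{MedinaOchoa}. The gap is in the step where you check the hypotheses of \cite[Theorem~1.4]{MedinaOchoa}, which as written does not go through. You apply the theorem directly to $f(x,s,\xi)=p\lambda s^{\gamma+1}|\xi|^q$ on $\Omega\times\mathbb R\times\mathbb R^n$, and you argue only continuity and subcritical growth.

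The theorem, as the paper uses it, needs $f$ uniformly continuous on all of $U\times\mathbb R\times\mathbb R^n$, bounded, and without harmful dependence on $s$. Your $f$ has none of these properties:
\begin{itemize}
\item $|\xi|^q$ is unbounded on $\mathbb R^n$, and for $q>1$ (that is, $p>3+\gamma$) it is not even uniformly continuous there.
\item The product $s^{\gamma+1}|\xi|^q$ is not uniformly continuous in $(s,\xi)$.
\item $f$ is increasing in $s$, whereas results of this type typically require the $s$-dependence to be absent or nonincreasing.
\end{itemize}
Writing ``depending on the version of the theorem'' leaves exactly this point unresolved.

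The missing idea is a localization that uses the regularity you mention only in passing. Fix $U\Subset\Omega$.
\begin{itemize}
\item The viscosity inequalities evaluate the right-hand side only at $(x_0,u(x_0),D\phi(x_0))$. So you may freeze the zeroth-order dependence into the coefficient $a(x):=p\lambda u(x)^{\gamma+1}$, which is locally H\"older and hence uniformly continuous on $U$. You already did exactly this at the endpoint $\gamma=p-2$; do it in general.
\item By \cite[Theorem~1.1]{AttouchiRuosteenoja}, $u\in C^{1,\alpha}_{\rm loc}(\Omega)$. Hence every test function touching $u$ at $x_0\in U$ has $D\phi(x_0)=Du(x_0)$, so $|D\phi(x_0)|<M$ for any $M>\|Du\|_{L^\infty(U)}$.
\end{itemize}
Consequently $u$ is, in $U$, also a viscosity solution of $-\Delta_p u=f_U(x,u,Du)$ with
\[
f_U(x,s,\eta):=a(x)\min\{|\eta|,M\}^q \quad (f_U:=a \text{ if } q=0).
\]
This $f_U$ is bounded, uniformly continuous, and independent of $s$, so \cite[Theorem~1.4]{MedinaOchoa} applies on each $U$. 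With this repair, the rest of your argument is fine, including the density extension. The critical-point issue you flag as the main obstacle is not where the difficulty lies: it is settled by citation to \cite[Section~3.3 and Proposition~2.4]{AttouchiRuosteenoja}, exactly as in the paper.
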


\begin{proof}
Set 
\(
a(x):=p\eigeng(\Omega)u(x)^{\gamma+1}
\).
The continuity and positivity of $u$,
together with its local H\"older regularity,
imply that $a$ is locally H\"older continuous.

By \cite[Theorem~1.1]{AttouchiRuosteenoja},
applied to \eqref{f:eigen-normalized}
with right-hand side $a/p$, we have
$u\in C^{1,\alpha}_{\rm loc}(\Omega)$
for some $\alpha>0$.

As observed in
\cite[Section~3.3]{AttouchiRuosteenoja},
equation \eqref{f:eigen-normalized}
implies
\eqref{f:weak-equation-formal}
in the viscosity sense appropriate
to the $p$-Laplacian.
For $-1<\gamma<0$, this implication
uses the equivalence of viscosity
definitions proved in
\cite[Proposition~2.4]{AttouchiRuosteenoja}.

Fix an open set $U\Subset\Omega$ and
choose $M>\|Du\|_{L^\infty(U)}$.
For $q>0$, define
\[
f_U(x,s,\eta):=
a(x)\min\{|\eta|,M\}^{q},
\]
whereas for $q=0$ set
$f_U(x,s,\eta):=a(x)$.

The function $f_U$ is uniformly
continuous on $U\times\mathbb R
\times\mathbb R^n$, is independent
of $s$, and is bounded. Therefore
it satisfies the assumptions of
\cite[Theorem~1.4]{MedinaOchoa}.

Since $u\in C^1(U)$, the gradient
of any test function touching $u$
at a point $x_0\in U$ equals
$Du(x_0)$. Consequently, the
truncation in the definition of
$f_U$ does not change the viscosity
equation on $U$.

Applying \cite[Theorem~1.4]
{MedinaOchoa} to both subsolutions
and supersolutions, we conclude that
\eqref{f:weak-equation-formal}
holds weakly in $U$.
Since $U\Subset\Omega$ is arbitrary,
the assertion follows.
\end{proof}

As a consequence of the above proposition, we obtain the following
result, which will be  used  to apply the
standard Schwarz rearrangement theory to the positive principal eigenfunction $u$ in
\eqref{f:eigen-normalized}:

\begin{lemma}[Global Sobolev regularity]\label{l:global-W0}
Let $u$ be as in Proposition~\ref{p:weak-formulation}.  Then
\[
u\in W_0^{1,p}(\Omega).
\]
\end{lemma}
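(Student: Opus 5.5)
The plan is to test the weak formulation \eqref{f:weak-formulation} with truncations of $u$ away from the boundary, derive a uniform bound on $\int|Du|^p$ over superlevel sets, and then pass to the limit. For $\e\in(0,1)$ we set $u_\e:=(u-\e)^+$. Since $u\in C(\overline\Omega)$, $u=0$ on $\partial\Omega$ and $u>0$ in $\Omega$, the set $\{u\ge\e\}$ is compact in $\Omega$. Hence $u_\e$ has support compactly contained in $\Omega$. Moreover $u_\e\in W^{1,p}(\Omega)\cap L^\infty(\Omega)$, because $u\in W^{1,p}_{\rm loc}(\Omega)$ (indeed $C^{1,\alpha}_{\rm loc}$, as recalled in the proof of Proposition~\ref{p:weak-formulation}). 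So $u_\e$ is an admissible test function, and $Du_\e=Du\,\chi_{\{u>\e\}}$.

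Testing gives
\[
\int_{\{u>\e\}}|Du|^p\,dx=p\eigeng(\Omega)\int_{\{u>\e\}}u^{\gamma+1}|Du|^q(u-\e)\,dx\le p\eigeng(\Omega)\int_{\{u>\e\}}|Du|^q\,dx,
\]
using $0\le u\le1$ from \eqref{f:max-normalization}.
\begin{itemize}
\item If $q=0$ (the endpoint $\gamma=p-2$), the right-hand side is at most $p\eigeng(\Omega)|\Omega|$.
\item If $0<q<p$, which holds throughout the range because $\gamma>-1$ gives $q=p-2-\gamma<p-1<p$, we apply H\"older's inequality:
\[
\int_{\{u>\e\}}|Du|^q\le |\Omega|^{1-q/p}\Big(\int_{\{u>\e\}}|Du|^p\Big)^{q/p}.
\]
The left-hand integral $X_\e:=\int_{\{u>\e\}}|Du|^p$ is finite because the domain of integration is compactly contained in $\Omega$. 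We then obtain $X_\e\le C X_\e^{q/p}$, hence $X_\e\le C^{p/(p-q)}$, with $C=p\eigeng(\Omega)|\Omega|^{1-q/p}$ independent of $\e$.
\end{itemize}
(We may assume $\eigeng(\Omega)>0$, which holds since positive eigenfunctions exist; in any case the bound is trivial otherwise.)

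Letting $\e\downarrow0$, monotone convergence gives $\int_\Omega|Du|^p<\infty$. Thus $\|u_\e\|_{W^{1,p}(\Omega)}$ is uniformly bounded, and $u_\e\to u$ uniformly on $\overline\Omega$. Each $u_\e$ is a $W^{1,p}$ function with compact support in $\Omega$, so it belongs to $W^{1,p}_0(\Omega)$. This space is weakly closed, so a subsequence converges weakly in $W^{1,p}$, and the limit must be $u$ because of the uniform convergence. Hence $u\in W^{1,p}_0(\Omega)$. Alternatively, one can note that $Du_\e\to Du$ strongly in $L^p$ by dominated convergence, since $|Du_\e|\le|Du|\in L^p$.

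The main obstacle is justifying that the truncation is a legitimate test function. This requires two things: compactness of the support, which comes from boundary continuity and the positivity of $u$, and the a priori finiteness of $X_\e$ needed for the absorption step. Both follow from the interior $C^{1,\alpha}$ regularity. No boundary regularity beyond continuity is needed.
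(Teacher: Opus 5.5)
Your proposal is correct and follows the paper's proof essentially step by step. You test with $(u-\varepsilon)_+$, bound $\int_{\{u>\varepsilon\}}|Du|^p$ uniformly using $0\le u\le 1$ and $q<p$, let $\varepsilon\downarrow0$, and conclude from $u_\varepsilon\to u$ in $W^{1,p}$ with each $u_\varepsilon\in W^{1,p}_0(\Omega)$; the only cosmetic difference is that you absorb via H\"older (correctly noting the a priori finiteness of $X_\varepsilon$), where the paper invokes Young's inequality.
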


\begin{proof}
For $\varepsilon>0$ set $u_\varepsilon:=(u-\varepsilon)_+$.  Since
$u\in C(\overline\Omega)$ and $u=0$ on $\partial\Omega$, the support of
$u_\varepsilon$ is compactly contained in $\Omega$.  Moreover, $u \in W ^ { 1, p}_{\rm{loc}} (\Omega)$. Hence
$u_\varepsilon$ is an admissible test function in
\eqref{f:weak-formulation}.  Using $0\le u\le1$, we obtain
\[
\int_{\{u>\varepsilon\}}|Du|^p\,dx
=p\eigeng(\Omega) \int_{\{u>\varepsilon\}}
u^{\gamma+1}|Du|^q(u-\varepsilon)\,dx
\le p\eigeng(\Omega) \int_{\{u>\varepsilon\}}|Du|^q\,dx.
\]
Since $q<p$, Young's inequality gives, for  a constant $C$ independent of
$\varepsilon$,
\[
\int_{\{u>\varepsilon\}}|Du|^p\,dx\le C |\Omega|.
\]
Letting $\varepsilon\downarrow0$ and using positivity of $u$ in $\Omega$
shows that $u\in W^{1,p}(\Omega)$.  Moreover
$u_\varepsilon\to u$ in $W^{1,p}(\Omega)$, because
$Du_\varepsilon=Du\,\mathbf1_{\{u>\varepsilon\}}$ and
$|Du|^p\in L^1(\Omega)$.  Since each $u_\varepsilon$ has compact support in
$\Omega$, it belongs to $W_0^{1,p}(\Omega)$; by closure, the same holds true for $u$.
\end{proof}

\subsection{On generalized inverses of strictly monotone functions}\label{s:generalized-inverses}

We collect here the facts about generalized inverses which will be used in the
rearrangement argument.  We follow the conventions and basic properties in
\cite{EmbrechtsHofert}; since our distribution function is decreasing rather
than increasing, the statements below are obtained from the corresponding
ones in \cite[Proposition~2.3]{EmbrechtsHofert} by reversing the order.

Let $\mu \colon [0,1] \to [0, L]$ be a strictly decreasing 
right-continuous function,
satisfying 
\[
\mu(0) = L,
\qquad
\lim_{t \to 1-} \mu(t) = 0 = \mu(1).
\]
We define the decreasing generalized inverse of $\mu$ by
\begin{equation}\label{f:def-w}
w(y):=\inf\{t\in[0,1]:\mu(t)\le y\},
\qquad 0\le y\le |\Omega|.
\end{equation}
Thus $w$ is decreasing.  
Right-continuity of $\mu$ gives
\begin{equation}\label{f:geninv-order}
\mu(t)\le y\quad\Longleftrightarrow\quad t\ge w(y),
\end{equation}
while strict monotonicity gives the exact inverse identity
\begin{equation}\label{f:wmu-identity}
w(\mu(t))=t,
\qquad 0\le t\le1.
\end{equation}
In general one only has
\begin{equation}\label{f:mu-w-bracket}
\mu(w(y))\le y\le \mu(w(y)-),
\end{equation}
where $\mu(x-)$ denotes the left limit.  The interval between the two
quantities in \eqref{f:mu-w-bracket} is precisely the interval of values on
which the generalized inverse is constant when $\mu$ has a jump; compare
\cite[Proposition~2.3]{EmbrechtsHofert}.

The following elementary observation will be used repeatedly.

\begin{lemma}[Differentiation of the generalized inverse]\label{l:inverse-differentiation}
The following reciprocal differentiation rules hold.
\begin{enumerate}
\item[(i)] Let $y\in(0,L)$ be a point at which $w$ is differentiable and $w'(y)<0$. 
Then $\mu$ is differentiable at $w(y)$ and
\begin{equation}\label{f:inverse-chain-rule}
\mu'(w(y))\,w'(y)=1.
\end{equation}
\item[(ii)] Let $t\in(0,1)$ be a point at which $\mu$ is differentiable and
$\mu'(t)<0$. Then $y:=\mu(t)$ is the unique point in $(0,L)$ such
that $w(y)=t$, the function $w$ is differentiable at $y$, and
\begin{equation}\label{f:inverse-chain-rule-2}
\mu'(t)w'(y)=1.
\end{equation}
\end{enumerate}
\end{lemma}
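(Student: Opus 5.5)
We argue directly from the definitions, using only \eqref{f:geninv-order}, \eqref{f:wmu-identity} and the strict monotonicity of $\mu$ and $w$. Both parts come from one idea. Near a point where one of the two functions has a nonzero derivative, that function is locally invertible in a quantitative sense. The other function is then pinned down by the exact relations between them, so its difference quotients are the reciprocals of the first function's difference quotients.

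\emph{Part (i).} Set $t_0:=w(y)$. Since $w$ is differentiable at $y$, it is continuous there. Since $w'(y)<0$, there is a neighborhood of $y$ on which $w(z)\neq w(y)$ for $z\neq y$, and the quotient $(w(z)-w(y))/(z-y)$ stays close to $w'(y)$.

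First we show that $\mu$ is continuous at $t_0$ and that $\mu(t_0)=y$. The bracket \eqref{f:mu-w-bracket} gives $\mu(t_0)\le y\le\mu(t_0-)$. If $\mu$ had a jump at $t_0$, then $w$ would be constant equal to $t_0$ on the interval $[\mu(t_0),\mu(t_0-)]$. This contradicts $w'(y)<0$, whether $y$ is an interior point or an endpoint of that interval. In the endpoint case we use one-sided quotients, which still converge to $w'(y)\neq0$. Hence $\mu(t_0)=y$.

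Next, for $s$ near $t_0$ put $z:=\mu(s)$. By \eqref{f:wmu-identity}, $w(z)=s$. By continuity of $\mu$ at $t_0$, which holds on the left by the no-jump argument and on the right by right-continuity, $z\to y$ as $s\to t_0$. Strict monotonicity gives $z\neq y$ whenever $s\neq t_0$. Therefore
\[
\frac{\mu(s)-\mu(t_0)}{s-t_0}=\frac{z-y}{w(z)-w(y)}\longrightarrow\frac1{w'(y)}\,,
\]
which proves $\mu'(t_0)w'(y)=1$.

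\emph{Part (ii).} Put $y:=\mu(t)$. Differentiability makes $\mu$ continuous at $t$, so $\mu(t-)=\mu(t)$. By \eqref{f:wmu-identity}, $w(y)=t$. For uniqueness, suppose $w(y')=t$. Then \eqref{f:mu-w-bracket} gives $\mu(t)\le y'\le\mu(t-)=\mu(t)$, so $y'=y$. We also have $y\in(0,L)$, since $t\in(0,1)$ and $\mu$ is strictly decreasing.

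For $z$ near $y$ put $s:=w(z)$. Since $\mu'(t)<0$, $\mu$ is strictly decreasing with a linear-rate bound near $t$: for $s$ close to $t$,
\[
|\mu(s)-\mu(t)|\ge c|s-t|
\]
for some $c>0$.

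The key step, and the one we expect to be the main obstacle, is showing $w(z)\to t$ as $z\to y$ together with a quantitative comparison between $z-y$ and $\mu(s)-\mu(t)$. We handle it as follows. From \eqref{f:geninv-order}, $\mu(s)\le z\le\mu(s-)$. For $s$ near $t$, $\mu$ may still have jumps, so $z$ is only bracketed. Continuity of $\mu$ at $t$, combined with the strict decrease of $\mu$, forces $s=w(z)\to t$. Indeed, if $z>y$ then $s\le t$, and $\mu(t-\epsilon)>y$ gives $s\ge t-\epsilon$ as soon as $z<\mu(t-\epsilon)$. The case $z<y$ is symmetric.

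Then
\[
\frac{z-y}{s-t}\quad\text{lies between}\quad \frac{\mu(s)-\mu(t)}{s-t}\quad\text{and}\quad\frac{\mu(s-)-\mu(t)}{s-t}\,.
\]
The second quotient is a limit of difference quotients $(\mu(\sigma)-\mu(t))/(\sigma-t)$ with $\sigma\uparrow s$, and $\sigma$ is close to $t$. Hence both bounds converge to $\mu'(t)$, and so $(z-y)/(s-t)\to\mu'(t)\neq0$. We also need $s\neq t$ when $z\neq y$; this holds by the uniqueness just proved. Inverting the quotient gives $w'(y)=1/\mu'(t)$, that is, \eqref{f:inverse-chain-rule-2}.
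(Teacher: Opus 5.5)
Your proof is correct and follows essentially the same route as the paper. In (i) you rule out a jump of $\mu$ at $w(y)$ via the bracket \eqref{f:mu-w-bracket} and then invert difference quotients using $w(\mu(s))=s$; in (ii) you squeeze $(z-y)/(s-t)$ between the quotients built from $\mu(s)$ and $\mu(s-)$, with $s=w(z)\to t$, exactly as the paper does (the linear-rate bound $|\mu(s)-\mu(t)|\ge c|s-t|$ you state is never used and can be dropped).
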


\begin{proof}
For \textup{(i)}, set $x=w(y)$. By \eqref{f:mu-w-bracket},
$\mu(x)\le y\le\mu(x-)$. If either inequality were strict, $w$ would be
constant on one side of $y$, contradicting $w'(y)<0$. Hence
$\mu(x)=\mu(x-)=y$, so $\mu$ is continuous at $x$. Since
$w(\mu(t))=t$, letting $t\to x$ gives
\[
w'(y)=\lim_{t\to x}\frac{t-x}{\mu(t)-\mu(x)}.
\]
As $w'(y)\ne0$, taking reciprocals yields \eqref{f:inverse-chain-rule}.

For \textup{(ii)}, set $y=\mu(t)$. 
Continuity of $\mu$ at $t$ and \eqref{f:wmu-identity} give $w(y)=t$, 
while \eqref{f:mu-w-bracket} shows that $w(z)=t$ implies $z=y$. 
If $z\to y$ and $s=w(z)$, monotonicity and continuity imply $s\to t$. 
Writing $a=\mu'(t)<0$, differentiability gives
\[
\mu(s)-y=a(s-t)+o(|s-t|),\qquad
\mu(s-)-y=a(s-t)+o(|s-t|).
\]
Since $\mu(s)\le z\le\mu(s-)$, we obtain
$z-y=a(s-t)+o(|s-t|)$, and therefore
\[
\frac{w(z)-w(y)}{z-y}=\frac{s-t}{z-y}\longrightarrow\frac1a.
\]
This proves \eqref{f:inverse-chain-rule-2}.
\end{proof}

Because $\mu$ need not be absolutely continuous, we also need the following
one-dimensional change-of-variable estimate.

\begin{lemma}[Monotone inverse inequality]\label{l:inverse}
Let $0<m<1$.  Then, for every nonnegative Borel function $\Phi$ and almost
every $y\in(0,L)$,
\begin{equation}\label{f:inverse-integral}
\int_{w(y)}^1
\Phi(t)[-\mu'(t)]^m\,dt
\le
\int_0^y
\Phi(w(x)) [-w'(x)]^{1-m}\,dx.
\end{equation}
Equality sign holds if, in addition, $\mu\in AC_{\rm loc}(0,1)$ or $w\in AC_{\rm loc}(0,L)$.
\end{lemma}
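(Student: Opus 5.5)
The idea is to change variables $x = \mu(t)$ in the left-hand side of \eqref{f:inverse-integral}. For the Lebesgue pointwise derivative this is only an inequality, because $\mu$ may have a singular (Cantor-type) part and jumps. I would decompose $\mu$ on $[w(y),1]$ via Lebesgue's decomposition $\mu=\mu_{ac}+\mu_s$. The distributional derivative $-D\mu$ is a positive measure whose absolutely continuous part has density $-\mu'(t)$. The pointwise derivative $\mu'$ exists a.e., and on the set $E:=\{t: \mu'(t) \text{ exists},\ \mu'(t)<0\}$ Lemma~\ref{l:inverse-differentiation}(ii) applies: at $y'=\mu(t)$ the function $w$ is differentiable and $w'(\mu(t))=1/\mu'(t)$. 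The integrand on the left vanishes off $E$ (up to a null set), since $\mu'\le0$ a.e. So it suffices to bound $\int_{E\cap(w(y),1)}\Phi(t)(-\mu'(t))^m\,dt$.

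The key step is the area formula for the monotone map $\mu$ restricted to $E$. Since $\mu$ is differentiable on $E$, the standard one-dimensional change of variables (Lusin (N) property holds on the set where a function is differentiable; see e.g.\ Federer's or Saks' results) gives, for nonnegative Borel $g$,
\[
\int_{E} g(t)\,|\mu'(t)|\,dt=\int_{\mu(E)} \sum_{t\in E\cap\mu^{-1}(x)} g(t)\,dx \le \int_{\mu(E)} g(w(x))\,dx .
\]
Here I use that $\mu$ is strictly decreasing, so preimages are single points, and $t=w(\mu(t))$ by \eqref{f:wmu-identity}. I choose $g(t)=\Phi(t)(-\mu'(t))^{m-1}$, which is finite on $E$. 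Writing $x=\mu(t)$ and using $(-\mu'(t))^{m-1}=(-w'(x))^{1-m}$ from Lemma~\ref{l:inverse-differentiation}(ii), the right side becomes $\int_{\mu(E\cap(w(y),1))}\Phi(w(x))(-w'(x))^{1-m}\,dx$. Since $t>w(y)$ implies $\mu(t)\le y$ by \eqref{f:geninv-order}, the image lies in $[0,y]$, and nonnegativity of the integrand lets me enlarge the domain to $(0,y)$. This yields \eqref{f:inverse-integral}; the restriction to a.e.\ $y$ only serves to avoid endpoint issues where $w$ is not differentiable.

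The main obstacle is justifying the change of variables without absolute continuity: the inequality direction comes exactly from discarding the part of $(0,y)$ not covered by $\mu(E)$. That part consists of jump gaps (where $w$ is constant, contributing $w'=0$ anyway) and images of the singular set, where $w'$ may be positive. I would cite the classical fact that for any function $f$ and any set $A$ on which $f$ is differentiable, $|f(A)|\le\int_A|f'|$, with equality for injective $f$ (Banach–Zarecki / Saks, Ch.~IX), and approximate $g$ by simple functions via monotone convergence.

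For the equality cases, first suppose $\mu\in AC_{\rm loc}$. Then $\mu$ has the (N) property, so $\mu(\{t:\mu'(t)=0\text{ or nonexistent}\})$ has measure zero. Since $\mu$ is continuous and strictly decreasing it is a homeomorphism, so $\mu(E\cap(w(y),1))$ is a.e.\ all of $(0,y)$, giving equality. Now suppose instead $w\in AC_{\rm loc}$. I would run the symmetric argument with the roles of $\mu$ and $w$ exchanged, applying the change of variables $t=w(x)$ on the right-hand side. This gives the reverse inequality, because the set $\{w'=0\}$ maps under $w$ to a null set (by the (N) property and Sard-type reasoning, $|w(\{w'=0\})|=0$), and on $\{w'<0\}$ Lemma~\ref{l:inverse-differentiation}(i) applies.
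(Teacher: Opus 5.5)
Your proof is correct, but it reaches the inequality by a different route from the paper.

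\textbf{Comparison of routes.} The paper never uses an area formula for $\mu$. It introduces the Lebesgue--Stieltjes measure $\nu=-d\mu$ and notes that $\nu$ is the push-forward of Lebesgue measure on $(0,L)$ under $w$. It then uses only the comparison $-\mu'(t)\,dt\le\nu$ (the absolutely continuous part of $\nu$), tested against $g=\Phi(-\mu')^{m-1}\mathbf 1_E$, and rewrites $g(w(x))$ via Lemma~\ref{l:inverse-differentiation}.

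You instead apply the area formula to $\mu$ on its differentiability set $E$. This needs the heavier Saks/Federer fact that a function differentiable at every point of a set has property (N) there; equivalently, the set splits into countably many pieces on which the function is Lipschitz. The paper's route is more elementary. Yours has the merit of showing exactly what is discarded, namely $(0,y)\setminus\mu(E\cap(w(y),1))$.

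For the equality cases, your treatment of $w\in AC_{\rm loc}$ is essentially the paper's: area formula for $w$ plus Lemma~\ref{l:inverse-differentiation}(i). Note that you should work on $(\varepsilon,y)$ and let $\varepsilon\downarrow0$, since $w$ is only locally absolutely continuous. For $\mu\in AC_{\rm loc}$, the paper simply observes that $\nu=-\mu'\,dt$.

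\textbf{Corrections.} Your description of the discarded set is wrong. Since $w$ is nonincreasing, $w'\le 0$ wherever it exists, so $w'$ is never positive. Moreover, take $x\in(0,y)$ with $w'(x)<0$. Lemma~\ref{l:inverse-differentiation}(i) and its proof give $\mu(w(x))=x$ and $w(x)\in E$. Also $w(x)>w(y)$, since otherwise $w$ would be constant on $[x,y]$. Hence $x\in\mu(E\cap(w(y),1))$.

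Consequently the right-hand integrand $(-w')^{1-m}$ vanishes a.e.\ on the discarded set, and nothing is lost. Your argument therefore gives equality in \eqref{f:inverse-integral} for every $y$, without any absolute continuity hypothesis. This is consistent with the paper's proof: there $\nu_s(E)=0$, because $\nu_s$ is concentrated where the density of $\nu$ is infinite, whereas on $E$ it equals $-\mu'$.

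Two smaller points:
\begin{enumerate}
\item In the $\mu\in AC_{\rm loc}$ case, the nullity of $\mu(\{\mu'=0\})$ comes from the estimate $|\mu(A)|\le\int_A|\mu'|$ that you cited, not from property (N).
\item In the $w\in AC_{\rm loc}$ case, you do not need $|w(\{w'=0\})|=0$ for the reverse inequality, since $(-w')^{1-m}$ already vanishes on $\{w'=0\}$.
\end{enumerate}
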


\begin{proof}
Let $\nu=-d\mu$ be the positive Lebesgue--Stieltjes
measure associated with the decreasing, right-continuous
function $\mu$. By the definition of the generalized
inverse, $\nu$ is the push-forward of Lebesgue measure
on $(0,L)$ under $w$. Indeed, for $0<a<b<1$,
\[
\nu((a,b])=\mu(a)-\mu(b)
=\big|\{x\in(0,L):a<w(x)\le b\}\big|.
\]

Set
\[
E=\{t\in(0,1):\mu'(t)<0\},\qquad
\widetilde E=\{x\in(0,L):w'(x)<0\},
\]
where the derivatives are understood at points where
they exist. By Lemma~\ref{l:inverse-differentiation},
for almost every $x\in(0,L)$,
$
x\in\widetilde E$ if and  only if 
$w(x)\in E$; moreover, 
at such points,
\[
-\mu'(w(x))=\frac{1}{-w'(x)}\,,  \qquad x \in \widetilde E \,. 
\]

Define
\[
g(t)=\Phi(t)[-\mu'(t)]^{m-1}\mathbf 1_E(t),
\]
with the convention that $g=0$ outside $E$.
Since $-\mu'(t)\,dt\le\nu$ as positive measures,
we obtain
\[
\begin{aligned}
\int_{w(y)}^1 \Phi(t)[-\mu'(t)]^m\,dt
&=\int_{(w(y),1)}g(t)[-\mu'(t)]\,dt\\
&\le\int_{(w(y),1)}g(t)\,d\nu(t)\\
&=\int_0^y g(w(x))\,dx\\
&=\int_0^y
\Phi(w(x))[-w'(x)]^{1-m}
\mathbf 1_{\widetilde E}(x)\,dx.
\end{aligned}
\]
Here the push-forward identity gives the third line:
if $w$ is constant at the level $w(y)$, the
corresponding contribution vanishes because $g=0$
at jump points of $\mu$. Since the integrand also
vanishes wherever $w'=0$, this proves
\eqref{f:inverse-integral}.

If $\mu\in AC_{\rm loc}(0,1)$, then
$\nu=-\mu'(t)\,dt$ in $(0,1)$, so the preceding
inequality is an equality.

Assume now that $w\in AC_{\rm loc}(0,L)$.
For $0<\varepsilon<y<L$, the one-dimensional
area formula for the nonincreasing function $w$
gives, for every nonnegative Borel function $h$,
\[
\int_{w(y)}^{w(\varepsilon)}h(t)\,dt
=
\int_\varepsilon^y h(w(x))[-w'(x)]\,dx.
\]
We apply this identity with
\[
h(t)=\Phi(t)[-\mu'(t)]^m,
\]
defining $\mu'$ arbitrarily on its set of
non-differentiability. At points where $w'<0$,
Lemma~\ref{l:inverse-differentiation} yields
\[
h(w(x))[-w'(x)]
=
\Phi(w(x))[-w'(x)]^{1-m},
\]
while both sides vanish when $w'=0$.
Consequently,
\[
\int_{w(y)}^{w(\varepsilon)}
\Phi(t)[-\mu'(t)]^m\,dt
=
\int_\varepsilon^y
\Phi(w(x))[-w'(x)]^{1-m}\,dx.
\]
Letting $\varepsilon\downarrow0$ and using
$w(\varepsilon)\uparrow1$, monotone convergence
gives equality in \eqref{f:inverse-integral}.
\end{proof}

\section{Reduction to a radial inequality for the Schwarz rearrangement}\label{s:1d}

Let $X_\delta$ be the completion of
\[
\mathcal D_R:=
\{\phi\in C^1([0,R]):\phi(R)=0\}
\]
with respect to the norm
\[
\|\phi\|_{X_\delta}
:=
\left(\int_0^R
r^\delta|\phi'|^{\gamma+2}\,dr\right)^{\frac{1}{\gamma+2}},
\]
where $\delta := (n-1)(\gamma+1)/(p-1)$ is the
exponent defined in \eqref{f:var}.

\begin{lemma}[Weighted radial problem]
\label{l:weighted-radial}
Let $z$ be the positive radial principal eigenfunction
in $B_R$, normalized by $z(0)=1$. Set
\[
\Lambda:=p'(\gamma+1)\lambda_{p,\gamma}(B_R).
\]
Then $z\in C^1((0,R))$, $z'<0$ in $(0,R)$, and
\begin{equation}\label{f:ball-integral}
r^\delta[-z'(r)]^{\gamma+1}
=
\Lambda\int_0^r
\rho^\delta z(\rho)^{\gamma+1}\,d\rho,
\qquad 0<r<R.
\end{equation}
Equivalently,
\begin{equation}\label{f:weighted-ode}
-\bigl(r^\delta|z'|^\gamma z'\bigr)'
=
\Lambda r^\delta z^{\gamma+1}
\qquad\text{in }(0,R),
\end{equation}
with $z(R)=0$ and the natural condition
$\lim_{r\downarrow0}r^\delta|z'(r)|^{\gamma+1}=0$. 
Moreover, 
\begin{equation}\label{f:weighted-Rayleigh}
\Lambda
=
\min_{\phi\in X_\delta\setminus\{0\}}
\frac{\displaystyle\int_0^R
r^\delta|\phi'|^{\gamma+2}\,dr}
{\displaystyle\int_0^R
r^\delta|\phi|^{\gamma+2}\,dr}\,,
\end{equation}
$z$ is a minimizer, and every positive
minimizer is a positive multiple of $z$.
\end{lemma}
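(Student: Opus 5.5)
The proof splits into two parts: derive the ODE from the radial structure of the viscosity eigenfunction, then identify $\Lambda$ with the weighted Rayleigh minimum through a one-dimensional Picone/convexity argument.

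\emph{Step 1: the ODE.} By simplicity of the principal eigenvalue and rotation invariance of $\mathcal F_{p,\gamma}$, the eigenfunction on $B_R$ is radial, $u(x)=z(|x|)$. The regularity from \cite[Theorem~1.1]{AttouchiRuosteenoja}, used in Proposition~\ref{p:weak-formulation}, gives $z\in C^1((0,R))$. At points with $z'(r)\neq0$ we have $Du=z'\,x/r$. Moreover $\langle D^2u\,\nu,\nu\rangle=z''$ and $\operatorname{tr}D^2u=z''+\frac{n-1}{r}z'$. So the equation reads
\[
-\tfrac{1}{p}|z'|^\gamma\Big((p-1)z''+\tfrac{n-1}{r}z'\Big)=\lambda z^{\gamma+1},\qquad \lambda=\lambda_{p,\gamma}(B_R).
\]
Multiplying by $p(\gamma+1)/(p-1)$ and noting that $(|z'|^\gamma z')'=(\gamma+1)|z'|^\gamma z''$ and $\delta/r=(n-1)(\gamma+1)/((p-1)r)$, this becomes exactly
\[
-(r^\delta|z'|^\gamma z')'=\tfrac{p(\gamma+1)}{p-1}\lambda\, r^\delta z^{\gamma+1}=\Lambda r^\delta z^{\gamma+1}.
\]
Rigorously, I would rather work with the weak formulation \eqref{f:weak-formulation}, written in polar coordinates with radial test functions. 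This gives
\[
r^{n-1}|z'|^{p-2}(-z')=p\lambda\int_0^r \rho^{n-1}z^{\gamma+1}|z'|^{q}\,d\rho,
\]
which holds wherever it makes sense. The right-hand side is positive for $r>0$, so $z'<0$ on $(0,R)$ and no interior critical points occur. Then the pointwise computation above is legitimate: $z\in C^2$ where $z'\neq0$, by the ODE. Integrating from $\varepsilon$ to $r$ and letting $\varepsilon\to0$ yields \eqref{f:ball-integral}, using that $r^\delta|z'|^{\gamma+1}\to0$. That limit follows from boundedness of $z'$ near $0$, since $z\in C^1$ in the ball and $z'(0)=0$ by radial symmetry.

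\emph{Step 2: the Rayleigh quotient.} Testing \eqref{f:weighted-ode} with $z$ itself, and integrating by parts on $(\varepsilon,R)$ with $z(R)=0$ and vanishing boundary term at $0$, shows that the quotient of $z$ equals $\Lambda$. For the reverse inequality I would use a one-dimensional Picone identity for the weighted $(\gamma+2)$-Laplacian $-(r^\delta|\phi'|^\gamma\phi')'$. For $\phi\in\mathcal D_R$, $\phi\ge0$ (replacing $\phi$ by $|\phi|$ is harmless), one has pointwise
\[
|\phi'|^{\gamma+2}\ \ge\ |z'|^\gamma z'\cdot\Big(\frac{\phi^{\gamma+2}}{z^{\gamma+1}}\Big)'.
\]
Multiplying by $r^\delta$ and integrating against the ODE gives $\int r^\delta|\phi'|^{\gamma+2}\ge\Lambda\int r^\delta\phi^{\gamma+2}$. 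The boundary terms need care. Near $R$, one can first take $\phi$ supported in $[0,R-\eta]$ and then pass to the limit. Near $0$, the term $r^\delta|z'|^{\gamma+1}\phi^{\gamma+2}/z^{\gamma+1}$ vanishes because $z(0)=1$. By density the inequality extends to all of $X_\delta$. Since the norm is defined through the completion, I would also note that $\int r^\delta|\phi|^{\gamma+2}$ is controlled by the norm via a weighted Hardy inequality with $\phi(R)=0$, so the quotient is continuous on $X_\delta$. Hence the infimum is attained by $z$ and equals $\Lambda$.

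\emph{Step 3: uniqueness.} Equality in Picone forces $\phi/z$ to be constant wherever $\phi>0$. For a positive minimizer this holds on all of $(0,R)$, so $\phi=cz$.

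\emph{Main obstacle.} I expect the hardest part to be the justification in Step 1 near $r=0$ and the use of the weak formulation with radial test functions, given that $|z'|^q$ appears on the right-hand side. An alternative route would be to cite the explicit radial construction from \cite{CrastaFragala2020}. The Picone boundary terms in Step 2, in the completion $X_\delta$, are secondary but need the Hardy-type control to be stated.
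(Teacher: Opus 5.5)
Your overall route (radial form of the weak formulation, flux identity, weighted ODE, Picone's inequality) is the same as the paper's. However, the step where you deduce $z'<0$ has a genuine gap. You assert that $p\lambda\int_0^r\rho^{n-1}z^{\gamma+1}|z'|^{q}\,d\rho$ is positive for every $r>0$. This is clear at the endpoint $\gamma=p-2$, where $q=0$. In the strict range $\gamma<p-2$, which is the genuinely new case, one has $q>0$, so the integrand vanishes wherever $z'=0$ and the claim is circular. If $z$ were constant on some ball $B_{r_0}$, both sides of your identity would vanish on $(0,r_0)$ and nothing would be contradicted. Indeed, for $q>0$ every locally constant function is a weak solution of $-\Delta_p u=p\lambda u^{\gamma+1}|Du|^q$, so no argument based on the weak formulation alone can rule out such a flat top.

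The missing ingredient is a return to the viscosity definition, as in the paper. Suppose $z\equiv c>0$ on $B_{r_0}$. For $\gamma\ge0$, the constant $c$ touches $z$ from below at the origin, and the supersolution inequality gives $0=(\mathcal F_{p,\gamma})^*(0,0)\ge\lambda c^{\gamma+1}>0$. For $-1<\gamma<0$, the locally-constant clause of the Birindelli--Demengel definition gives $0\ge\lambda c^{\gamma+1}$ directly. Once the flat top is excluded, continuity of $z'$ shows $z'\not\equiv0$ on $(0,r)$ for every $r>0$. The integral is then positive, and the rest of your Step 1 goes through.

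There are two smaller points. First, in Step 3, a.e.\ equality in the pointwise Picone inequality does not follow from the integrated inequality you obtained by density. A positive minimizer $\phi\in X_\delta$ may be unbounded near $0$, and $\phi^{\gamma+2}/z^{\gamma+1}$ is uncontrolled near $R$. You must show that the integrated Picone defect actually equals $\int_0^R r^\delta|\phi'|^{\gamma+2}\,dr-\Lambda\int_0^R r^\delta\phi^{\gamma+2}\,dr=0$ for $\phi$ itself. The paper does this by testing the ODE with $\min\{\phi,M\}^{\gamma+2}/(z+\varepsilon)^{\gamma+1}$. It then lets $M\to\infty$ by dominated convergence and $\varepsilon\to0$ via Fatou's lemma on the nonnegative Picone integrand.

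Second, when testing with $z$ you integrate by parts up to $r=R$, and you need $z$ to be an admissible competitor. For both, observe that \eqref{f:ball-integral} forces $z'$ to have a finite limit as $r\uparrow R$. Hence $z\in C^1([0,R])$, and therefore $z\in X_\delta$.
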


\begin{proof}
Set $\lambda:=\lambda_{p,\gamma}(B_R)$
and recall that $q:=p-2-\gamma\geq 0$.

\emph{Step 1: Radial flux and strict monotonicity.}
By Proposition~2, the eigenfunction is a weak
solution of
\[
-\Delta_p z=p\lambda z^{\gamma+1}|Dz|^q
\qquad\text{in }B_R.
\]
Consequently, its radial profile satisfies
\begin{equation}\label{f:radial-flux-eq}
H'(r)
=
p\lambda r^{n-1}z(r)^{\gamma+1}|z'(r)|^q
\qquad\text{in }\mathcal D'(0,R),
\end{equation}
where
\[
H(r):= -r^{n-1}|z'(r)|^{p-2}z'(r).
\]

We first establish the zero-flux condition
at the origin. Since $z$ is bounded,
$z\in W^{1,p}_{\mathrm{loc}}(B_R)$, and $0\le q<p$,
the right-hand side of
\eqref{f:radial-flux-eq} belongs to $L^1(0,b)$
for every $b<R$. Hence $H$ has an absolutely
continuous representative on $[0,b]$, and
$H(0+)$ exists.

Choose a smooth radial test function
$\varphi(x)=\eta(|x|)$, compactly supported
in $B_R$ and constant near the origin.
The weak formulation, after cancellation
of the factor $|\mathbb S^{n-1}|$, gives
\[
-\int_0^R H(r)\eta'(r)\,dr
=
p\lambda\int_0^R
r^{n-1}z^{\gamma+1}|z'|^q\eta(r)\,dr.
\]
Integrating the left-hand side by parts
and using \eqref{f:radial-flux-eq}, we obtain
\[
H(0+)\eta(0)+\int_0^R H'(r)\eta(r)\,dr
=
\int_0^R H'(r)\eta(r)\,dr.
\]
Since $\eta(0)$ is arbitrary, $H(0+)=0$.
Thus
\begin{equation}\label{f:radial-p-flux}
H(r)
=
p\lambda\int_0^r
\rho^{n-1}z(\rho)^{\gamma+1}
|z'(\rho)|^q\,d\rho.
\end{equation}

In particular, $H\geq 0$ and $z$ is nonincreasing. 
We claim that $H(r) > 0$ for every $r\in(0,R)$. 
Otherwise, if $H(r_0)=0$ for some $r_0>0$, the
monotonicity of $H$ and $H(0)=0$ imply $H=0$ on $(0,r_0)$. 
Hence $z$ is constant on $B_{r_0}$. 
This contradicts the viscosity supersolution condition for
\[
-|Dz|^\gamma\Delta_p^N z
=\lambda z^{\gamma+1}>0.
\]
For $\gamma\ge0$ the contradiction follows
by testing with the constant function.
For $-1<\gamma<0$ it follows directly
from the locally constant case in the
Birindelli--Demengel viscosity definition
adopted in Section~\ref{s:weak}.

Therefore $H>0$ on $(0,R)$. Since $H$ is continuous and
\[
z'(r)
=
-\left(\frac{H(r)}{r^{n-1}}\right)^{1/(p-1)}
\quad\text{for a.e. }r\in(0,R),
\]
the absolute continuity of $z$ on compact
subintervals shows that $z\in C^1((0,R))$
and $z'<0$ throughout $(0,R)$.

\emph{Step 2: Weighted equation and Rayleigh
characterization.}
By \eqref{f:radial-p-flux}, $H(0)=0$ and
\[
H'(r)
=
p\lambda r^{n-1}z^{\gamma+1}[-z'(r)]^q
\quad\text{a.e. in }(0,R).
\]
By the definitions of $\delta$, $q$, and $\Lambda$,
we obtain on $(0,R)$
\[
\left(H^{\frac{\gamma+1}{p-1}}\right)'
=
\frac{\gamma+1}{p-1}\, H^{-\frac{q}{p-1}} H'
=
p \lambda\, \frac{\gamma+1}{p-1}\, r^\delta z^{\gamma+1}
=
\Lambda\, \, r^\delta z^{\gamma+1}.
\]
As $H^{\frac{\gamma+1}{p-1}}= r^\delta(-z')^{\gamma+1}$ and
$H(0)=0$, integration yields \eqref{f:ball-integral}. 
Equation \eqref{f:weighted-ode} and its zero-flux
condition follow immediately.

We next justify the variational formulation.
For every $\phi\in\mathcal D_R$,
integration by parts and Holder's
inequality give
\[
\begin{aligned}
\int_0^R r^\delta|\phi|^{\gamma+2}\,dr
&=
-\frac{{\gamma+2}}{\delta+1}
\int_0^R r^{\delta+1}
|\phi|^{\gamma}\phi\phi'\,dr
\leq
\frac{(\gamma+2)R}{\delta+1}
\int_0^R r^{\delta}
|\phi|^{\gamma+1}|\phi'|\,dr
\\ & \leq
\frac{({\gamma+2})R}{\delta+1}
\left(\int_0^R r^\delta|\phi|^{\gamma+2}\,dr
\right)^{\frac{\gamma+1}{\gamma+2}}
\left(\int_0^R r^\delta|\phi'|^{\gamma+2}\,dr
\right)^{\frac{1}{\gamma+2}}.
\end{aligned}
\]
Consequently,
\begin{equation}\label{f:weighted-Poincare}
\|\phi\|_{L^{\gamma+2}(r^\delta dr)}
\le
\frac{(\gamma+2)R}{\delta+1}
\|\phi\|_{X_\delta}.
\end{equation}
This proves that the weighted gradient norm
is a norm on $\mathcal D_R$ and that
$X_\delta$ embeds continuously into
$L^{\gamma+2}((0,R),r^\delta\,dr)$.

Since $z$ is bounded, \eqref{f:ball-integral}
gives
\[
-z'(r)=O(r^{1/(\gamma+1)})
\qquad\text{as }r\downarrow0.
\]
Hence $z'$ extends continuously to $r=0$
with $z'(0)=0$. Moreover, the integral
on the right-hand side of
\eqref{f:ball-integral} has a finite limit
as $r\uparrow R$. Since $R>0$, the same
identity shows that $z'$ has a finite
limit as $r\uparrow R$. Thus
$z\in C^1([0,R])$, and, since $z(R)=0$,
we have $z\in\mathcal D_R\subset X_\delta$.

We now use Picone's inequality (see e.g.~\cite{AllegrettoHuang1998}). For
$\phi\in\mathcal D_R$ and $\varepsilon>0$,
\[
|\phi'|^{\gamma+2}
\ge
|z'|^{\gamma}z'
\left(
\frac{|\phi|^{\gamma+2}}
{(z+\varepsilon)^{\gamma+1}}
\right)'
\qquad\text{a.e. in }(0,R).
\]
Multiplying by $r^\delta$ and integrating by parts,
the boundary terms vanish at $r=0$ by
the zero-flux condition, and at $r=R$
because $\phi(R)=0$. 
Using \eqref{f:weighted-ode}, we obtain
\[
\int_0^R r^\delta|\phi'|^{\gamma+2}\,dr
\ge
\Lambda\int_0^R r^\delta z^{\gamma+1}
\frac{|\phi|^{\gamma+2}}
{(z+\varepsilon)^{\gamma+1}}\,dr.
\]
Letting $\varepsilon\downarrow0$ and
using monotone convergence gives
\begin{equation}\label{f:weighted-Picone-bound}
\int_0^R r^\delta|\phi'|^{\gamma+2}\,dr
\ge
\Lambda\int_0^R r^\delta|\phi|^{\gamma+2}\,dr.
\end{equation}
By density and \eqref{f:weighted-Poincare},
this inequality holds for every
$\phi\in X_\delta$.

On the other hand, testing
\eqref{f:weighted-ode} with $z$ gives
\[
\int_0^R r^\delta|z'|^{\gamma+2}\,dr
=
\Lambda\int_0^R r^\delta z^{\gamma+2}\,dr.
\]
Thus $z$ attains the lower bound in
\eqref{f:weighted-Picone-bound}, proving
\eqref{f:weighted-Rayleigh}.

\emph{Step 3: Simplicity of the positive
minimizer.}
Let $\phi\in X_\delta$ be another positive
minimizer. Then
\begin{equation}\label{f:weighted-equality}
\int_0^R r^\delta|\phi'|^{\gamma+2}\,dr
=
\Lambda\int_0^R r^\delta\phi^{\gamma+2}\,dr.
\end{equation}
Every element of $X_\delta$ belongs to
$W^{1,\gamma+2}_{\mathrm{loc}}(0,R)$, so that
the pointwise Picone inequality applies
to $\phi$ and $z$ almost everywhere.

To justify the integrated equality case, fix
$\varepsilon>0$ and, for $M>0$, set
$\phi_M:=\min\{\phi,M\}$. Define
\[
\psi_{M,\varepsilon}
:=
\frac{(\phi_M)^{\gamma+2}}{(z+\varepsilon)^{\gamma+1}}.
\]
Since $\phi_M\in X_\delta$ is bounded,
$z\in C^1([0,R])$, and
$z+\varepsilon\ge\varepsilon$, the
Lipschitz chain rule and multiplication
by a bounded Lipschitz function show
that $\psi_{M,\varepsilon}\in X_\delta$.

It is therefore an admissible test function
in the weak formulation of
\eqref{f:weighted-ode}, and
\[
\int_0^R r^\delta|z'|^{\gamma}z'
\psi_{M,\varepsilon}'\,dr
=
\Lambda\int_0^R r^\delta z^{\gamma+1}
\psi_{M,\varepsilon}\,dr.
\]
Picone's inequality applied to $\phi_M$
and $z+\varepsilon$ gives
\[
0\leq
\int_0^R r^\delta
\left[
|\phi_M'|^{\gamma+2}
-
|z'|^{\gamma}z'
\psi_{M,\varepsilon}'
\right]dr.
\]

We now let $M\uparrow\infty$.
For fixed $\varepsilon>0$, the chain rule
gives
\[
|\psi_{M,\varepsilon}'|
\le
C_\varepsilon
\left(
\phi^{\gamma+1}|\phi'|
+
\phi^{\gamma+2}|z'|
\right)
\qquad\text{a.e. in }(0,R),
\]
where $C_\varepsilon$ is independent of $M$.
The right-hand side belongs to
$L^1((0,R),r^\delta\,dr)$.
Indeed,
\[
\int_0^R r^\delta
\phi^{\gamma+1}|\phi'|\,dr
\le
\left(\int_0^R r^\delta\phi^{\gamma+2}\,dr
\right)^{\frac{\gamma+1}{\gamma+2}}
\left(\int_0^R r^\delta|\phi'|^{\gamma+2}\,dr
\right)^{\frac{1}{\gamma+2}},
\]
while $\phi^{\gamma+2}|z'|$ is integrable because
$z'$ is bounded.

Moreover, $\phi_M\to\phi$ and
$\phi_M'\to\phi'$ almost everywhere,
with $|\phi_M'|\le|\phi'|$.
Dominated convergence therefore yields
\[
\begin{split}
0 & \leq
\int_0^R r^\delta
\biggl[
|\phi'|^{\gamma+2}
-
|z'|^{\gamma}z'
\left(
\frac{\phi^{\gamma+2}}{(z+\varepsilon)^{\gamma+1}}
\right)'
\biggr]\,dr
\\ & =
\int_0^R r^\delta|\phi'|^{\gamma+2}\,dr
-
\Lambda\int_0^R r^\delta
z^{\gamma+1}\frac{\phi^{\gamma+2}}
{(z+\varepsilon)^{\gamma+1}}\,dr.
\end{split}
\]

By \eqref{f:weighted-equality}, the last
expression tends to zero as
$\varepsilon\downarrow0$. Since the
Picone integrand is nonnegative,
Fatou's lemma implies that
\[
|\phi'|^{\gamma+2}
=
|z'|^{\gamma}z'
\left(\frac{\phi^{\gamma+2}}{z^{\gamma+1}}\right)'
\qquad\text{a.e. in }(0,R).
\]
The equality case in Picone's inequality
therefore gives
\[
\left(\frac{\phi}{z}\right)'=0
\qquad\text{a.e. in }(0,R).
\]
Thus $\phi=cz$ for some $c>0$,
proving uniqueness of the positive
minimizer up to multiplication by a
constant.
\end{proof}

\bigskip
Write $|\Omega|=\omega_nR^n$, let $u^*$ denote the Schwarz rearrangement of $u$ on $B_R$, and let $v$ be defined on $[0, R]$ by 
\[
u^*(x)=v(|x|).
\]
Namely,  $v$ is the radial profile of the Schwarz rearrangement of $u$ (in particular, $v$ is nonincreasing, $v(0)=1$, and $v(R)=0$).

\begin{proposition}[Reduction to the radial inequality]\label{p:1d-comparison}
Assume that the rearrangement $v$ satisfies 
\begin{equation}\label{f:key-gamma}
r^\delta[-v'(r)]^{\gamma+1}
\le
p' (\gamma+1) \eigeng(\Omega)
\int_0^r\rho^\delta v(\rho)^{\gamma+1}\,d\rho \qquad \text{ for a.e. } r \in (0, R) \,. 
\end{equation}
 Then
\begin{equation}\label{f:lambda-comparison}
\eigeng(\Omega)\ge\eigeng(B_R).
\end{equation}
If equality holds, then $v=z$ after the normalization $v(0)=z(0)=1$, and
\eqref{f:key-gamma} is an equality for almost every $r\in(0,R)$.
\end{proposition}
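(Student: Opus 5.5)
The plan is to use $v$ as a test function in the weighted Rayleigh characterization \eqref{f:weighted-Rayleigh} of Lemma~\ref{l:weighted-radial}, and to use \eqref{f:key-gamma} to bound the numerator. Set $\Lambda_\Omega:=p'(\gamma+1)\eigeng(\Omega)$ and $G(r):=\int_0^r\rho^\delta v^{\gamma+1}\,d\rho$.

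\emph{Step 1: admissibility of $v$.} By Lemma~\ref{l:global-W0}, $u\in W^{1,p}_0(\Omega)$, so the P\'olya--Szeg\H{o} principle gives $u^*\in W_0^{1,p}(B_R)$. Hence $v$ is locally absolutely continuous on $(0,R]$, nonincreasing, with $v(R)=0$ and $0\le v\le 1$. From \eqref{f:key-gamma} and $v\le1$ we get $[-v'(r)]^{\gamma+1}\le \Lambda_\Omega\, r/(\delta+1)$. Thus $v'$ is bounded on $(0,R)$ and $v$ is Lipschitz on $[0,R]$. A Lipschitz function vanishing at $R$ can be approximated in the $X_\delta$ norm by functions in $\mathcal D_R$ (mollify and correct near $R$), so $v\in X_\delta\setminus\{0\}$.

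\emph{Step 2: the integral inequality.} Multiply \eqref{f:key-gamma} by $-v'\ge0$. This gives
\[
r^\delta(-v')^{\gamma+2}\le \Lambda_\Omega\, G(r)(-v'(r))\quad\text{a.e.}
\]
Integrate over $(\varepsilon,R)$ and integrate by parts on the right, noting that $G$ is $C^1$ with $G'=r^\delta v^{\gamma+1}$:
\[
\int_\varepsilon^R G(-v')\,dr=G(\varepsilon)v(\varepsilon)-G(R)v(R)+\int_\varepsilon^R r^\delta v^{\gamma+2}\,dr .
\]
The boundary terms vanish as $\varepsilon\downarrow0$ because $G(0)=0$, $v$ is bounded and $v(R)=0$. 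Therefore
\[
\int_0^R r^\delta|v'|^{\gamma+2}\,dr\le\Lambda_\Omega\int_0^R r^\delta v^{\gamma+2}\,dr .
\]
Combining this with \eqref{f:weighted-Rayleigh} yields $\Lambda\le\Lambda_\Omega$, which is \eqref{f:lambda-comparison}.

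\emph{Step 3: the equality case.} Suppose $\eigeng(\Omega)=\eigeng(B_R)$. Then the chain of inequalities forces $v$ to be a positive minimizer in \eqref{f:weighted-Rayleigh}. By the uniqueness part of Lemma~\ref{l:weighted-radial}, $v=cz$, and the normalization $v(0)=z(0)=1$ gives $c=1$, so $v=z$. Then \eqref{f:ball-integral} with $\Lambda=\Lambda_\Omega$ says exactly that \eqref{f:key-gamma} holds with equality for every $r$. Alternatively, equality in the integrated inequality forces $\bigl(\Lambda_\Omega G-r^\delta(-v')^{\gamma+1}\bigr)(-v')=0$ a.e. Since $z'<0$, this again gives equality a.e.

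The main obstacle is expected in Step 1: making sure $v$ really belongs to $X_\delta$, the completion of $C^1$ functions. The bound on $v'$ coming from \eqref{f:key-gamma} handles this, and it also handles the vanishing of the boundary terms near $0$, which would otherwise be delicate for a general rearrangement.
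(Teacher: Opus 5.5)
Your proposal is correct and follows essentially the paper's argument: multiply \eqref{f:key-gamma} by $-v'\ge0$, integrate by parts, test $v$ in \eqref{f:weighted-Rayleigh}, and use the uniqueness of the positive minimizer in the equality case. The differences are minor. You obtain $v\in X_\delta$ from the pointwise bound $[-v']^{\gamma+1}\le \Lambda_\Omega r/(\delta+1)$, which makes $v$ Lipschitz, whereas the paper deduces finite weighted energy from the integrated inequality and then truncates near $r=0$; and you read off equality in \eqref{f:key-gamma} directly from \eqref{f:ball-integral}, where the paper uses the vanishing-defect argument.
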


\begin{proof}
Multiply \eqref{f:key-gamma} by $-v'(r)\ge0$ and integrate over $(0,R)$.
With
\[
G(r):=\int_0^r\rho^\delta v(\rho)^{\gamma+1}\,d\rho
\]
we obtain
\[
\int_0^Rr^\delta|v'|^{\gamma+2}\,dr
\le
p' (\gamma+1) \eigeng(\Omega)
\int_0^R[-v'(r)]G(r)\,dr.
\]
An integration by parts, using $v(R)=0$ and $G(0)=0$, gives
\[
\int_0^R[-v']G
=\int_0^Rr^\delta v^{\gamma+2}\,dr.
\]
Therefore
\begin{equation}\label{f:Rayleigh-v}
\frac{\displaystyle\int_0^Rr^\delta|v'|^{\gamma+2}\,dr}
{\displaystyle\int_0^Rr^\delta v^{\gamma+2}\,dr}
\le p' (\gamma+1) \eigeng(\Omega).
\end{equation}

We claim that $v\in X_\delta$. Indeed, since
$0\le v\le1$, the function
\[
G(r)=\int_0^r\rho^\delta v(\rho)^{\gamma+1}\,d\rho
\]
is bounded on $[0,R]$. Moreover, $v$ is
nonincreasing and belongs to
$AC_{\rm loc}(0,R)$. Hence
\[
\int_0^R[-v'(r)]G(r)\,dr
\le G(R)\int_0^R[-v'(r)]\,dr
\le G(R)<\infty.
\]
The preceding inequality therefore yields
\[
\int_0^Rr^\delta|v'|^{\gamma+2}\,dr<\infty.
\]

To conclude, recall that $v(R)=0$.
For $\varepsilon>0$, define
\[
v_\varepsilon(r):=
\begin{cases}
v(\varepsilon),&0\le r\le\varepsilon,\\
v(r),&\varepsilon<r\le R.
\end{cases}
\]
Then $v_\varepsilon$ is absolutely continuous
on $[0,R]$, vanishes at $R$, and
\[
\int_0^R r^\delta
|v_\varepsilon'-v'|^{\gamma+2}\,dr
=
\int_0^\varepsilon
r^\delta|v'|^{\gamma+2}\,dr
\longrightarrow0.
\]
Since the weight $r^\delta$ is bounded
above and below by positive constants
on $[\varepsilon,R]$, each
$v_\varepsilon$ can be approximated
in the weighted gradient norm by
functions in $\mathcal D_R$.
It follows that $v\in X_\delta$.

By \eqref{f:weighted-Rayleigh}, the left-hand side is at least
$p' (\gamma+1) \eigeng(B_R)$, proving \eqref{f:lambda-comparison}.

Assume now equality of the eigenvalues.  Then equality holds in both sides
of the Rayleigh comparison, so $v$ is a positive first minimizer in
\eqref{f:weighted-Rayleigh}.  After normalization, $v=z$.  Since
$z'(r)<0$ for every $r\in(0,R)$, the nonnegative defect in
\eqref{f:key-gamma}, multiplied by $-v'$, has zero integral.  Hence that
defect vanishes almost everywhere, and \eqref{f:key-gamma} is an equality
for almost every $r$.
\end{proof}

Consequently,  the proof of the Faber--Krahn inequality
is reduced to establishing \eqref{f:key-gamma} for the radial profile $v$.
The next section is entirely devoted to this task.

\section{Proof of the radial differential inequality  \eqref{f:key-gamma}}\label{s:proof-radial}

The proof requires some preliminary lemmas. For $t\in(0,1)$, let us set
\begin{equation}\label{f:levels-u}
U_t:=\{u>t\},
\qquad \text{ and } \qquad 
S_t:=\{u=t\}
\end{equation}
and define 
\begin{equation}\label{f:def-F}
F(t):=\int_{S_t}|Du|^{p-1}\,d\mathcal H^{n-1}.
\end{equation}

\begin{lemma}[Flux identity]\label{l:flux}
For almost every $t\in(0,1)$,
\begin{equation}\label{f:flux-identity}
F(t)
=p\eigeng(\Omega)
\int_{U_t}u^{\gamma+1}|Du|^q\,dx.
\end{equation}
Moreover, if $q>0$, we henceforth identify $F$
with the representative defined for every
$t\in(0,1)$ by the right-hand side of
\eqref{f:flux-identity}. 
Then 
$F\in AC([0,1])$, and
\begin{equation}\label{f:Fprime}
-F'(t)
=p\eigeng(\Omega)t^{\gamma+1}
\int_{S_t\cap\{|Du|>0\}}
|Du|^{q-1}\,d\mathcal H^{n-1}
\qquad\text{for a.e. }t\in(0,1).
\end{equation}
Furthermore,
\begin{equation}\label{f:F-to-zero}
F(t)\longrightarrow0
\qquad\text{as }t\to1^-.
\end{equation}
\end{lemma}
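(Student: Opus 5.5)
The plan is to test the weak formulation \eqref{f:weak-formulation} of Proposition~\ref{p:weak-formulation} with truncations of $u$ and to read off the flux identity through the coarea formula. Fix $0<t<1$ and $h>0$ small, and set
\[
\varphi_h:=\min\Big\{\frac{(u-t)_+}{h},1\Big\}.
\]
Since $u\in C(\overline\Omega)$, $u=0$ on $\partial\Omega$ and $t>0$, the support of $\varphi_h$ lies in $\overline{U_t}\Subset\Omega$. Moreover $\varphi_h\in W^{1,p}\cap L^\infty$, so $\varphi_h$ is admissible. Plugging it into \eqref{f:weak-formulation} gives
\[
\frac1h\int_{\{t<u<t+h\}}|Du|^p\,dx
=p\eigeng(\Omega)\int_\Omega u^{\gamma+1}|Du|^q\varphi_h\,dx .
\]
By the coarea formula (recall $u\in C^{1,\alpha}_{\rm loc}$), the left-hand side equals $\frac1h\int_t^{t+h}F(s)\,ds$. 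This is finite, since $|Du|^p\in L^1(\Omega)$ by Lemma~\ref{l:global-W0}, so $F\in L^1(0,1)$. As $h\downarrow0$, the left side converges to $F(t)$ at every Lebesgue point of $F$. On the right, $\varphi_h\uparrow\mathbf 1_{U_t}$, and monotone convergence applies because $u^{\gamma+1}|Du|^q\in L^1$ (as $q<p$, $u\le1$). This yields \eqref{f:flux-identity} for a.e.\ $t$.

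For the regularity when $q>0$, define $F$ everywhere by the right-hand side of \eqref{f:flux-identity}. On $\{Du=0\}$ the integrand vanishes because $q>0$, so by the coarea formula
\[
F(t)=p\eigeng(\Omega)\int_{U_t\cap\{|Du|>0\}}u^{\gamma+1}|Du|^{q-1}|Du|\,dx
=p\eigeng(\Omega)\int_t^1 s^{\gamma+1}\Big(\int_{S_s\cap\{|Du|>0\}}|Du|^{q-1}\,d\mathcal H^{n-1}\Big)ds .
\]
The inner function is in $L^1(0,1)$, since its integral is dominated by $\int_\Omega |Du|^q\,dx<\infty$ after discarding $s^{\gamma+1}\le1$. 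One has to be a bit careful near $s=0$, but $s^{\gamma+1}$ is bounded and we only need integrability of the product. Hence $F$ is the integral of an $L^1$ function, so $F\in AC([0,1])$, and Lebesgue differentiation gives \eqref{f:Fprime} for a.e.\ $t$. The limit \eqref{f:F-to-zero} follows from absolute continuity of the integral, since $|U_t|\to|\{u=1\}\cap\cdots|$. More precisely $U_t\downarrow\{u\ge1\}\setminus\{u=1\}=\emptyset$, because $U_t=\{u>t\}$ decreases to $\{u\ge 1\}\cap\{u>1\}=\emptyset$. For $q=0$ the same argument works with the right-hand side of \eqref{f:flux-identity}, and the a.e.\ representative of $F$ still tends to zero.

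The main obstacle I expect is the justification of the coarea manipulations. One point is the singular weight $|Du|^{q-1}$ when $0<q<1$: I will handle it by restricting to $\{|Du|>0\}$, where the coarea formula applies to the nonnegative Borel integrand $u^{\gamma+1}|Du|^{q}\mathbf 1_{\{|Du|>0\}}/|Du|$ without any integrability concern. The other point is that $\{Du=0\}$ contributes nothing to the left-hand side of the truncation identity, which holds because $|Du|^p=0$ there. The admissibility of $\varphi_h$ in \eqref{f:weak-formulation} relies on the density statement in Proposition~\ref{p:weak-formulation}.
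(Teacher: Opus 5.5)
Your argument is the paper's: the same truncation $\varphi_h$ in the weak formulation, coarea plus Lebesgue differentiation on the left and monotone convergence on the right (the paper uses dominated convergence) to get \eqref{f:flux-identity}. Then, for $q>0$, coarea with the integrand restricted to $\{|Du|>0\}$ gives $F(t)=p\eigeng(\Omega)\int_t^1 g(s)\,ds$ with $g\in L^1(0,1)$, hence $F\in AC([0,1])$ and \eqref{f:Fprime}.

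The one faulty step is your justification of \eqref{f:F-to-zero}. In fact $\bigcap_{t<1}\{u>t\}=\{u\ge1\}=\{u=1\}$, not $\{u>1\}$. This set is nonempty, since it contains the interior maximum points, and you give no reason for it to be Lebesgue-null. For $q>0$ the slip is harmless. The limit follows at once from the representation you already wrote, since $\int_t^1 g\to0$. Equivalently, $Du=0$ at every point of $\{u=1\}$ (interior maxima of a $C^1$ function), so $u^{\gamma+1}|Du|^q$ vanishes there, and dominated convergence gives limit $0$.

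Your aside that the same holds for $q=0$ is not supported by your argument. There the a.e. representative tends to $p\eigeng(\Omega)|\{u=1\}|$, so you would need $|\{u=1\}|=0$, which requires a separate proof. The paper proves \eqref{f:F-to-zero}, and later uses it, only for $q>0$.
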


\begin{proof}
Fix $t\in(0,1)$ and $\e>0$ sufficiently small.
Define
\[
\varphi_{t,\e}(x):=
\begin{cases}
0, & u(x)\le t,\\[1mm]
\dfrac{u(x)-t}{\e}, & t<u(x)<t+\e,\\[2mm]
1, & u(x)\ge t+\e.
\end{cases}
\]
Since $t>0$, the support of $\varphi_{t,\e}$
is compactly contained in $\Omega$. Moreover,
\[
D\varphi_{t,\e}
=\frac1\e Du\,
\mathbf 1_{\{t<u<t+\e\}}
\qquad\text{a.e. in }\Omega.
\]
Using $\varphi_{t,\e}$ as a test function in
\eqref{f:weak-formulation}, we obtain
\begin{equation}\label{f:truncation-identity}
\frac1\e
\int_{\{t<u<t+\e\}}|Du|^p\,dx
=
p\eigeng(\Omega)
\int_\Omega
u^{\gamma+1}|Du|^q
\varphi_{t,\e}\,dx.
\end{equation}

By the coarea formula,
\[
\begin{aligned}
\frac1\e
\int_{\{t<u<t+\e\}}|Du|^p\,dx
&=
\frac1\e\int_t^{t+\e}
\left(
\int_{S_\tau}|Du|^{p-1}\,
d\mathcal H^{n-1}
\right)d\tau\\
&=
\frac1\e\int_t^{t+\e}F(\tau)\,d\tau.
\end{aligned}
\]
Since $u$ is bounded,
$u\in W^{1,p}_0(\Omega)$, and $0\le q<p$,
the function $u^{\gamma+1}|Du|^q$
belongs to $L^1(\Omega)$.
Moreover,
$\varphi_{t,\e}\to\mathbf 1_{U_t}$
pointwise as $\e\downarrow0$.
Dominated convergence on the right-hand
side of \eqref{f:truncation-identity}
and Lebesgue differentiation on the
left-hand side yield
\eqref{f:flux-identity} for almost every
$t\in(0,1)$.

Assume now that $q>0$.
In this case,
$u^{\gamma+1}|Du|^q$ vanishes almost
everywhere on the critical set
$\{|Du|=0\}$.
Applying the coarea formula with
\[
g(x):=
u(x)^{\gamma+1}|Du(x)|^{q-1}
\mathbf 1_{\{|Du|>0\}},
\]
we obtain
\begin{equation}\label{f:flux-coarea}
F(t)
=
p\eigeng(\Omega)
\int_t^1\tau^{\gamma+1}
\left(
\int_{S_\tau\cap\{|Du|>0\}}
|Du|^{q-1}\,d\mathcal H^{n-1}
\right)d\tau
\end{equation}
for every $t\in(0,1)$, where $F$
is now understood as the representative
specified in the statement.

The integrand on the right-hand side
is nonnegative and belongs to
$L^1(0,1)$, since
\[
\begin{aligned}
&\int_0^1\tau^{\gamma+1}
\left(
\int_{S_\tau\cap\{|Du|>0\}}
|Du|^{q-1}\,d\mathcal H^{n-1}
\right)d\tau\\
&\qquad=
\int_\Omega u^{\gamma+1}|Du|^q\,dx
<\infty.
\end{aligned}
\]
Consequently, \eqref{f:flux-coarea} implies 
$F\in AC([0,1])$,
and differentiation gives
\eqref{f:Fprime} for almost every
$t\in(0,1)$.

Finally, since the integrand in
\eqref{f:flux-coarea} belongs to
$L^1(0,1)$, its integral over $(t,1)$
tends to zero as $t\to1^-$.
This proves \eqref{f:F-to-zero}.
\end{proof}

\bigskip

Next we state a geometric lower bound  for  the flux. 

 For any $t \in (0, 1)$, we denote by $P ( U_t)$  the 
perimeter of $U_t$, and we set 
\begin{equation}
\mu(t):=|U_t|\,.
\end{equation} 
 We extend $\mu$ to $[0,1]$ by setting $\mu(0)=|\Omega|$ and $\mu(1)=0$.
Notice that  $\mu$ is strictly
decreasing on $[0,1]$:  if $0<s<t<1$, continuity of $u$, positivity
in $\Omega$, the normalization $\max_{\overline\Omega}u=1$, and connectedness
of $\Omega$ imply that the open set $\{s<u<t\}$ is nonempty; hence it has
positive measure and $\mu(s)>\mu(t)$.  The endpoint cases follow in the same way from 
positivity in $\Omega$ and the zero boundary condition. Moreover,
$\mu$ is right-continuous: if $t_j\downarrow t$, then
$\{u>t_j\}\uparrow\{u>t\}$, and continuity from below of Lebesgue measure
gives $\mu(t_j)\to\mu(t)$.

We also observe for later use that, since  by Lemma~\ref{l:global-W0} and the
P\'olya--Szeg\H{o} inequality, $u^*$ belongs to $W^{1,p}_0(B_R)$, the  radial
profile function $v$ 
belongs to $W^{1,p}_{\mathrm{loc}}((0,R))\subset AC_{\mathrm{loc}}((0,R))$, 
and hence the function 
$w(y):=v \big (\big (\frac{y}{\omega_n}\big)^{1/n}\big)$
belongs to $AC_{\mathrm{loc}}((0,|\Omega|))$. 
Since $w$ is the generalized inverse of $\mu$,  Lemma~\ref{l:inverse-differentiation} shows that, 
 at every
point where $v$ is differentiable and $v'(r)<0$,
$\mu$ is differentiable at $v(r)$ and
\begin{equation}\label{f:distribution-v}
\mu(v(r))=\omega_nr^n,
\qquad
[-\mu'(v(r))][-v'(r)]=n\omega_nr^{n-1}.
\end{equation}

We shall also use the following consequence of the
one-dimensional area formula. If
$N\subset(0,1)$ has Lebesgue measure zero,
then, since $v\in AC_{\rm loc}(0,R)$,
\[
\int_{\{r\in(0,R):\,v(r)\in N\}}
|v'(r)|\,dr=0.
\]
Consequently,
\[
\bigl|\{r\in(0,R):\,v(r)\in N,\ v'(r)<0\}\bigr|=0.
\]
Thus any property holding for almost every
$t\in(0,1)$ may be evaluated at $t=v(r)$
for almost every $r$ such that $v'(r)<0$.

We also record for later use that, from Lemma~\ref{l:inverse},
choosing $r\in (0,R)$ and $y = \omega_n r^n$, we obtain that,
for every nonnegative Borel function $\Phi$,
\begin{equation}\label{f:inverse}
\int_{v(r)}^1
\Phi(t)[-\mu'(t)]^m\,dt
=
(n\omega_n)^m
\int_0^r
\Phi(v(\rho))\rho^{m(n-1)}[-v'(\rho)]^{1-m}\,d\rho.
\end{equation}

Below, we set 
\[  Y(t):=F(t)^m\,, \qquad  m:=\frac{\gamma+2}{p} \,. \]

 \smallskip 
 
 \begin{lemma}[Geometric lower bound for the flux]\label{l:perimeter-flux}
For almost every $t\in(0,1)$,
\begin{equation}\label{f:perimeter-holder}
P(U_t)^p
\le F(t)[-\mu'(t)]^{p-1}.
\end{equation}
Consequently,
\begin{equation}\label{f:Y-perimeter}
Y(t)[-\mu'(t)]^{m(p-1)}
\ge P(U_t)^{\gamma+2}
\ge
n^{\gamma+2}\omega_n^{(\gamma+2)/n}
\mu(t)^{(n-1)(\gamma+2)/n}.
\end{equation}
\end{lemma}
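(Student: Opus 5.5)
The plan is to obtain \eqref{f:perimeter-holder} from the coarea formula together with H\"older's inequality on thin layers $\{t<u<t+h\}$, and then pass to the limit $h\downarrow0$. The second inequality in \eqref{f:Y-perimeter} will then follow by raising to the power $m$ and applying the isoperimetric inequality.

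\emph{Step 1: layer estimate.} Since $u\in W^{1,p}_0(\Omega)$ by Lemma~\ref{l:global-W0}, the coarea formula applies. For $0<t<t+h<1$ it gives
\[
\int_t^{t+h}P(U_\tau)\,d\tau\le\int_{\{t<u<t+h\}}|Du|\,dx,
\]
using $P(U_\tau)\le\mathcal H^{n-1}(S_\tau)$ for a.e.\ $\tau$; the inequality can even be taken as an equality via the Fleming--Rishel formula. H\"older's inequality with exponents $p$ and $p'$ then yields
\[
\int_{\{t<u<t+h\}}|Du|\,dx\le\Big(\int_{\{t<u<t+h\}}|Du|^p\Big)^{1/p}\,\big|\{t<u<t+h\}\big|^{1/p'}.
\]
By the coarea formula once more, $\int_{\{t<u<t+h\}}|Du|^p=\int_t^{t+h}F(\tau)\,d\tau$. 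We also have $|\{t<u<t+h\}|\le\mu(t)-\mu(t+h)$, since the set $\{u=t+h\}$ only adds measure to the right-hand side.

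\emph{Step 2: limit.} Divide by $h$ and let $h\downarrow0$, which gives
\[
\frac1h\int_t^{t+h}P(U_\tau)\,d\tau\le\Big(\frac1h\int_t^{t+h}F\Big)^{1/p}\Big(\frac{\mu(t)-\mu(t+h)}{h}\Big)^{1/p'}.
\]
At a.e.\ $t$, three things hold simultaneously. First, $t$ is a Lebesgue point of $F\in L^1$, because $\int_0^1F=\int|Du|^p<\infty$. Second, $\mu$ is differentiable at $t$, since $\mu$ is monotone. Third, $t$ is a Lebesgue point of $\tau\mapsto P(U_\tau)$, which lies in $L^1$ by the coarea formula because $u\in W^{1,1}$. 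The main obstacle is this last point: we need the averaged perimeters to converge to $P(U_t)$ for a.e.\ $t$. This follows from local integrability together with the Lebesgue differentiation theorem. If one prefers to avoid that, lower semicontinuity of perimeter under the $L^1$ convergence $U_\tau\to U_t$ would at least give $P(U_t)\le\liminf$ of the averages; this also suffices, because we take limits from the right. Raising the resulting inequality to the power $p$ gives \eqref{f:perimeter-holder}.

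\emph{Step 3: consequence.} Raise \eqref{f:perimeter-holder} to the power $m=(\gamma+2)/p>0$. This gives $P(U_t)^{\gamma+2}\le F(t)^m[-\mu'(t)]^{m(p-1)}=Y(t)[-\mu'(t)]^{m(p-1)}$. Finally, the isoperimetric inequality $P(U_t)\ge n\omega_n^{1/n}\mu(t)^{(n-1)/n}$, raised to the power $\gamma+2$, gives the last inequality in \eqref{f:Y-perimeter}. Note that $\gamma+2>1>0$ keeps all the powers monotone.
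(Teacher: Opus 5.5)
Your proof is correct. It differs from the paper's mainly in the order of the steps: you integrate over a layer first and differentiate at the end.

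The paper works on a single level surface. For a.e. $t$ it bounds $P(U_t)$ by $\mathcal H^{n-1}(S_t\cap\{|Du|>0\})$ and applies H\"older on $S_t$, which gives $P(U_t)\le F(t)^{1/p}Q(t)^{1/p'}$ with $Q(t)=\int_{S_t\cap\{|Du|>0\}}|Du|^{-1}\,d\mathcal H^{n-1}$. It then proves $Q(t)\le-\mu'(t)$ a.e. from the measure inequality $Q\,dt\le-d\mu$, which it obtains by integrating $Q$ over layers via coarea and then taking Radon--Nikodym densities.

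You instead apply H\"older in volume on $\{t<u<t+h\}$. This never introduces $Q$, because the volume factor is directly the measure of the layer, which is at most $\mu(t)-\mu(t+h)$. You then let $h\downarrow0$, using three facts:
\begin{itemize}
\item $F$ has a Lebesgue point at $t$;
\item $\mu$ is differentiable at $t$;
\item either $t$ is a Lebesgue point of $\tau\mapsto P(U_\tau)$, or one uses lower semicontinuity of perimeter under the $L^1$ convergence $U_\tau\to U_t$ as $\tau\downarrow t$, which holds because $\mu$ is right-continuous.
\end{itemize}

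Your route is slightly more elementary. It needs $P(U_\tau)\le\mathcal H^{n-1}(S_\tau)$ only in integrated form (or via Fleming--Rishel). It avoids the a.e. identification of the reduced boundary with the noncritical level surface and avoids the Radon--Nikodym step.

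The paper's route has a structural advantage: it produces the auxiliary bound \eqref{f:Q-muprime}, which is reused in Step~1 of Section~\ref{s:proof-radial} through \eqref{f:Holder-J}. If you adopt your approach, you would redo that estimate in the same layer fashion. This works: since $q=p(1-m)$, H\"older with exponents $1/(1-m)$ and $1/m$ gives
\[
\int_{\{t<u<t+h\}}|Du|^q\,dx\le\Big(\int_t^{t+h}F\Big)^{1-m}\big(\mu(t)-\mu(t+h)\big)^m.
\]
Then coarea and Lebesgue differentiation yield $\int_{S_t\cap\{|Du|>0\}}|Du|^{q-1}\,d\mathcal H^{n-1}\le F(t)^{1-m}[-\mu'(t)]^m$ for a.e. $t$.
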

 
 \begin{proof}
For almost every $t$, the reduced boundary of $U_t$ is represented, up to
$\mathcal H^{n-1}$-null sets, by the corresponding level surface on the
noncritical part.  H\"older's inequality yields
\[
P(U_t)
\le
\int_{S_t\cap\{|Du|>0\}}1\,d\mathcal H^{n-1}
\le
F(t)^{1/p}Q(t)^{(p-1)/p}\,,
\]
where
\begin{equation}\label{f:def-Q}
Q(t):=
\int_{S_t\cap\{|Du|>0\}}\frac1{|Du|}\,d\mathcal H^{n-1}.
\end{equation}

The decreasing function $\mu$ satisfies, in the sense of positive Radon
measures on $(0,1)$,
\begin{equation}\label{f:measure-Q-mu}
Q(t)\,dt\le -d\mu(t).
\end{equation}
Consequently,
\begin{equation}\label{f:Q-muprime}
Q(t)\le -\mu'(t)
\end{equation}
for almost every $t$ at which the ordinary derivative of the monotone
function $\mu$ exists.

Indeed, for $0<a<b<1$, the coarea formula gives
\[
\int_a^bQ(t)\,dt
=|\{a<u<b,\ |Du|>0\}|
\le |\{a<u<b\}|
\le \mu(a)-\mu(b^-).
\]
This is exactly \eqref{f:measure-Q-mu}.  Taking Radon--Nikodym densities of
the absolutely continuous parts yields \eqref{f:Q-muprime}.
 Using  \eqref{f:Q-muprime}  gives \eqref{f:perimeter-holder}.  Raising it to the
power $m=(\gamma+2)/p$ gives the first inequality in
\eqref{f:Y-perimeter}.  The second one is the Euclidean isoperimetric
inequality for sets of finite perimeter; see, for instance,
\cite[Chapter~12]{Maggi}.
\end{proof}

\bigskip 
We are now ready to prove the  
target radial differential inequality.  
We proceed separately in the strict range case $\gamma < p-2$ and the endpoint case $\gamma = p -2$.

\bigskip 
{\bf Proof of   \eqref{f:key-gamma} for  $\gamma < p-2$}.  
We recall that 
\[m := \frac{\gamma+2}{p} \in (0,1),
\quad
q := p - 2 - \gamma \in (0, p-1)\,,
\quad
\delta:=\frac{(n-1)(\gamma+1)}{p-1}  \in (0, n-1)\,.
\] 
We argue in three steps. 

\bigskip
\underbar {Step 1} (differential estimate for the flux).

\smallskip
We claim that 
\begin{equation}\label{f:Yt-differential}
-Y'(t)
\le
(\gamma+2)\eigeng(\Omega)
t^{\gamma+1}[-\mu'(t)]^m \qquad \text{ for a.e. } t \in (0, 1)
\end{equation}
and, consequently,
\begin{equation}\label{f:Yt-integrated}
Y(t)
\le
(\gamma+2)\eigeng(\Omega)
\int_t^1 \tau^{\gamma+1}[-\mu'(\tau)]^m\,d\tau.
\end{equation}

Indeed, since
$q-1=(1-m)(p-1)+m(-1)$,
H\"older's inequality on $S_t\cap\{|Du|>0\}$ gives
\begin{equation}\label{f:Holder-J}
\int_{S_t\cap\{|Du|>0\}}|Du|^{q-1}\,d\mathcal H^{n-1}
\le
F(t)^{1-m} Q(t)^m.
\end{equation}
Starting from the equality \eqref{f:Fprime} obtained in Lemma \ref{l:flux},  and using first \eqref{f:Holder-J} and then
\eqref{f:Q-muprime}, we obtain 
\[
-F'(t)
\le
p\eigeng(\Omega)t^{\gamma+1}
F(t)^{1-m}[-\mu'(t)]^m.
\]
Hence,
\[
-Y'(t)
=mF(t)^{m-1}[-F'(t)]
\le
mp\eigeng(\Omega)t^{\gamma+1}[-\mu'(t)]^m,
\]
 which proves \eqref{f:Yt-differential} recalling that $mp=\gamma+2$.  Integrating from
$t$ to $1$ and using condition \eqref{f:F-to-zero} in Lemma \ref{l:flux} gives
\eqref{f:Yt-integrated}.

\bigskip

\medskip 
\underbar{Step 2} (combining geometric lower bound and differential estimate for the flux).

\smallskip We claim that, for a.e.\ $r\in (0,R)$,
\begin{equation}\label{f:intermediate-radial}
r^{m(n-1)}[-v'(r)]^{m(p-1)}
\leq
(\gamma+2)\eigeng(\Omega)
\int_0^r
\rho^{m(n-1)} v(\rho)^{\gamma+1}[-v'(\rho)]^{1-m}\,d\rho .
\end{equation}

By the preceding area-formula argument,
\eqref{f:Y-perimeter} holds at $t=v(r)$
for almost every $r$ such that $v'(r)<0$.
Using \eqref{f:distribution-v}, we obtain 
\begin{equation}
\begin{split}
Y(v(r))
&\geq
\frac{(n\omega_nr^{n-1})^{\gamma+2}}
{[-\mu'(v(r))]^{m(p-1)}}
\\
&\geq
(n\omega_n)^m r^{m(n-1)}
[-v'(r)]^{m(p-1)},
\label{f:Y-lower-r}
\end{split}
\end{equation}
where we have used the relations 
$\gamma + 2 = mp$.

On the other hand, the inequality \eqref{f:Yt-integrated} obtained in Step 1 and 
Lemma~\ref{l:inverse}, applied with
$\Phi(t)=t^{\gamma+1}$,
give
\begin{equation}\label{f:Y-upper-r}
Y(v(r))
\leq
(\gamma+2)\eigeng(\Omega)(n\omega_n)^m
\int_0^r
\rho^{m(n-1)}v(\rho)^{\gamma+1}[-v'(\rho)]^{1-m}
\,d\rho.
\end{equation}
Comparison of \eqref{f:Y-lower-r} and \eqref{f:Y-upper-r} yields
\eqref{f:intermediate-radial} at almost every point where $v'<0$.  
At points where $v'=0$ its left-hand side is zero, so the same inequality holds
there as well.

\medskip 
\underbar{Step 3} (conclusion). 

\smallskip 
The inequality  \eqref{f:intermediate-radial} obtained in Step~2 can be rewritten as
\begin{equation}\label{f:I-ineq}
r^{m(n-1)}[-v'(r)]^{m(p-1)}\leq (\gamma+2)\eigeng(\Omega) I(r),
\end{equation}
where
\[
I(r):=\int_0^r\rho^{m(n-1)} v(\rho)^{\gamma+1}[-v'(\rho)]^{1-m}\,d\rho.
\]

Raising \eqref{f:I-ineq} to the power 
$\frac{\delta}{m(n-1)}$
and observing that $\frac{(p-1)\delta}{n-1} = \gamma + 1$
gives
\begin{equation}\label{f:first-K}
r^\delta[-v'(r)]^{\gamma+1}\leq 
[(\gamma+2)\eigeng(\Omega)]^{\frac{\delta}{m(n-1)}} J(r),
\qquad
J(r) := I(r)^{\frac{\delta}{m(n-1)}}\,.
\end{equation}
Furthermore,
\[
\begin{split}
J'(r) & = {\frac{\delta}{m(n-1)}} I(r)^{{\frac{\delta}{m(n-1)}}-1} I'(r)
\\ & =
{\frac{\delta}{m(n-1)}} I(r)^{{\frac{\delta}{m(n-1)}}-1}
\, r^{m(n-1)} v(r)^{\gamma+1}[-v'(r)]^{1-m}\,.
\end{split}
\]
Using \eqref{f:first-K},
\[
[-v'(r)]^{1-m} \leq
[(\gamma+2)\eigeng(\Omega)]^{\frac{1-m}{m(p-1)}} 
r^{-\frac{(1-m)(n-1)}{p-1}}
I(r)^{\frac{1-m}{m(p-1)}}\,.
\]
Hence, observing that
\begin{equation}
\label{f:idexp}
\frac{\delta}{m(n-1)}+\frac{1-m}{m(p-1)} = 1,
\end{equation}
we obtain
\[
J'(r) \leq 
\frac{\delta}{m(n-1)}
[(\gamma+2)\eigeng(\Omega)]^{\frac{1-m}{m(p-1)}} 
r^\delta v(r)^{\gamma+1}\,.
\]
As $J(0)=0$,
\begin{equation}
\label{f:second-K}
J(r)
\leq
\frac{\delta}{m(n-1)}
[(\gamma+2)\eigeng(\Omega)]^{\frac{1-m}{m(p-1)}} 
\int_0^r\rho^\delta v(\rho)^{\gamma+1}\,d\rho.
\end{equation}
Combining this with \eqref{f:first-K}, 
recalling \eqref{f:idexp}
and $\frac{\delta(\gamma+2)}{m(n-1)} = p' (\gamma + 1)$,
we finally obtain \eqref{f:key-gamma}.
\qed

\bigskip 
\medskip 

{\bf Proof of   \eqref{f:key-gamma} for  $\gamma = p-2$}.  

In this case, there is no  need of a differential estimate for the flux such as the one obtained in Step 1 above. 
Indeed,  we can proceed more directly as follows. 
By Lemma \ref{l:perimeter-flux}
and the Euclidean
isoperimetric inequality we have
\begin{equation}\label{f:endpoint-lower}
F(t)[-\mu'(t)]^{p-1}
\ge
P(U_t)^p
\ge
n^p\omega_n^{p/n}
\mu(t)^{p(n-1)/n}
\end{equation}
for almost every $t\in(0,1)$.

By the area-formula argument recalled above,
\eqref{f:endpoint-lower} holds at $t=v(r)$
for almost every $r$ such that $v'(r)<0$.
At these points, \eqref{f:distribution-v} gives
\[
\mu(v(r))=\omega_n r^n,
\qquad
[-\mu'(v(r))][-v'(r)]
=n\omega_n r^{n-1}.
\]
Consequently, 
evaluating \eqref{f:endpoint-lower} at $t=v(r)$ yields
\begin{equation}\label{f:endpoint-lower-r}
F(v(r))
\ge
n\omega_n r^{n-1}[-v'(r)]^{p-1}.
\end{equation}

On the other hand, by \eqref{f:flux-identity} we have
\begin{equation}\label{f:endpoint-flux}
F(t)
=
p\lambda_{p,p-2}(\Omega)
\int_{U_t}u^{p-1}\,dx.
\end{equation}

Since $u^*(x)=v(|x|)$, equimeasurability and
$\mu(v(r))=\omega_nr^n$ give
\[
\int_{U_{v(r)}}u^{p-1}\,dx
=
n\omega_n
\int_0^r
\rho^{n-1}v(\rho)^{p-1}\,d\rho.
\]
Hence, 
\begin{equation}\label{f:endpoint-upper-r}
F(v(r))
=
p\lambda_{p,p-2}(\Omega)n\omega_n
\int_0^r
\rho^{n-1}v(\rho)^{p-1}\,d\rho.
\end{equation}
Combining \eqref{f:endpoint-lower-r} and
\eqref{f:endpoint-upper-r}, we obtain
\[
r^{n-1}[-v'(r)]^{p-1}
\le
p\lambda_{p,p-2}(\Omega)
\int_0^r
\rho^{n-1}v(\rho)^{p-1}\,d\rho.
\]
At points where $v'(r)=0$ the same inequality is immediate. Thus
\eqref{f:key-gamma} holds also for $\gamma=p-2$, since in this case
we have 
$\delta=n-1$ and 
$p'(\gamma+1)=p$. 
 \qed

\section{Rigidity}\label{s:rigidity}

We now analyze the equality case. The point is that equality in the
eigenvalue comparison propagates backwards through the proof of
\eqref{f:key-gamma}, eventually forcing equality in the Euclidean
isoperimetric inequality for almost every superlevel set.

\begin{proposition}[Rigidity of the superlevel sets]\label{p:rigidity-levels}
Assume that
\begin{equation}\label{f:equality-eigenvalues}
\eigeng(\Omega)=\eigeng(B_R).
\end{equation}
Then, for almost every $t\in(0,1)$,
\begin{equation}\label{f:iso-equality}
P(U_t)
=
n\omega_n^{1/n}|U_t|^{(n-1)/n}.
\end{equation}
Consequently, for almost every $t\in(0,1)$, the set $U_t$ agrees,
up to a set of Lebesgue measure zero, with a ball.
\end{proposition}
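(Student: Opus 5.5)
Under \eqref{f:equality-eigenvalues}, Proposition~\ref{p:1d-comparison} gives $v=z$ (normalized) and equality in \eqref{f:key-gamma} for a.e.\ $r\in(0,R)$. Lemma~\ref{l:weighted-radial} gives $z'<0$ on $(0,R)$, hence $v'<0$ everywhere there. Then \eqref{f:distribution-v} holds at every $r$, $\mu$ is differentiable at $v(r)$ with $-\mu'(v(r))>0$, and $\mu=\omega_n z^{-1}(\cdot)^n$ is locally absolutely continuous. So the equality clause of Lemma~\ref{l:inverse} applies, and \eqref{f:inverse} is an identity. The plan is to run the chain of inequalities in Section~\ref{s:proof-radial} backwards and show that every inequality must be an equality a.e. In particular, the isoperimetric step in \eqref{f:Y-perimeter} (resp.\ \eqref{f:endpoint-lower}) must be an equality at $t=v(r)$ for a.e.\ $r$. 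Since $v$ is a $C^1$ diffeomorphism of $(0,R)$ onto $(0,1)$, it maps null sets to null sets, so this gives \eqref{f:iso-equality} for a.e.\ $t$.

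\emph{Endpoint case $\gamma=p-2$.} Here the argument is short. Equality in \eqref{f:key-gamma} combined with \eqref{f:endpoint-upper-r} forces equality in \eqref{f:endpoint-lower-r}. That inequality was obtained from \eqref{f:endpoint-lower} by dividing by the positive quantity $[-\mu'(v(r))]^{p-1}$ and using the exact identities \eqref{f:distribution-v}. Hence both inequalities in \eqref{f:endpoint-lower} are equalities at $t=v(r)$ for a.e.\ $r$, and in particular the isoperimetric one is.

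\emph{Case $\gamma<p-2$.} Step 3 is a Gronwall-type comparison, and I would show that equality in \eqref{f:key-gamma} forces equality in \eqref{f:first-K}, hence in \eqref{f:I-ineq}, for a.e.\ $r$. Write $A(r)$ for the left side of \eqref{f:first-K} divided by its constant. The bound $J'\le c\,r^\delta v^{\gamma+1}$ came from the pointwise estimate $A\le J$, raised to a positive power. Set $D(r)=c\int_0^r\rho^\delta v^{\gamma+1}\,d\rho-J(r)$; it is nonnegative, nondecreasing and satisfies $D(0)=0$. Equality in \eqref{f:key-gamma} means $A=c\int_0^r\rho^\delta v^{\gamma+1}$, and since $A\le J$ this gives $D\le 0$, hence $D\equiv0$. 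Then $J=A$ a.e., which is equality in \eqref{f:first-K}, and therefore in \eqref{f:intermediate-radial}. Equality in \eqref{f:intermediate-radial} means \eqref{f:Y-lower-r} and \eqref{f:Y-upper-r} coincide, because \eqref{f:Y-upper-r} is an identity transformation of \eqref{f:Yt-integrated} once \eqref{f:inverse} holds with equality. So the sandwich
\[
(n\omega_n)^m r^{m(n-1)}[-v'(r)]^{m(p-1)}\le \frac{(n\omega_n r^{n-1})^{\gamma+2}}{[-\mu'(v(r))]^{m(p-1)}}\le \frac{P(U_{v(r)})^{\gamma+2}}{[-\mu'(v(r))]^{m(p-1)}}\le Y(v(r))\le \cdots
\]
collapses to equalities. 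The first term equals the second exactly by \eqref{f:distribution-v}. The middle inequality is the isoperimetric one, rewritten via $\mu(v(r))=\omega_nr^n$, so it becomes an equality at $t=v(r)$ for a.e.\ $r$.

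\emph{Conclusion and main obstacle.} The conclusion then follows from the equality case of the Euclidean isoperimetric inequality for sets of finite perimeter \cite[Chapter~12]{Maggi}: for a.e.\ $t$, $U_t$ coincides with a ball up to a Lebesgue-null set. The main obstacle I expect is the bookkeeping of the ``a.e.'' qualifiers. The chain involves statements holding for a.e.\ $t$ and statements holding for a.e.\ $r$, and the transfer between the two must be checked carefully. The tool for this is the area-formula remark preceding \eqref{f:inverse}, together with $v'<0$ everywhere. A second point to verify is that the Gronwall inversion in Step 3 is genuinely reversible, that is, that $D\equiv 0$ yields pointwise equality a.e.
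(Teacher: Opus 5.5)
Your proposal is correct and follows the paper's proof essentially step by step. Equality of eigenvalues gives $v=z$ with $v'<0$ everywhere, and the equality is then traced back through Step~3, Step~2 and the chain \eqref{f:Y-perimeter} (resp.\ \eqref{f:endpoint-lower}); this forces equality in the isoperimetric inequality at $t=v(r)$ for a.e.\ $r$, and the null-set property of $v$ transfers it to a.e.\ $t$. The only difference is that your auxiliary function $D$ in Step~3 is unnecessary: \eqref{f:first-K} and \eqref{f:second-K} form a sandwich whose extreme terms coincide, which already forces both to be equalities a.e., exactly as the paper argues.
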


\begin{proof}
By Proposition~\ref{p:1d-comparison}, assumption
\eqref{f:equality-eigenvalues} implies that $v=z$ and that
\eqref{f:key-gamma} is an equality for almost every $r\in(0,R)$.
In particular,
\[
v'(r)=z'(r)<0
\qquad\text{for every }r\in(0,R).
\]

Suppose first that $\gamma<p-2$. We trace the equality backwards
through the three steps in the proof of \eqref{f:key-gamma}. 
In Step~3 we obtained \eqref{f:key-gamma} by proving the two inequalities
\begin{equation}\label{f:K-sandwich}
r^\delta[-v'(r)]^{\gamma+1}
\leq
[(\gamma+2)\eigeng(\Omega)]^{\frac{\delta}{m(n-1)}} J(r)
\leq
p' (\gamma+1) \eigeng(\Omega)
\int_0^r\rho^\delta v(\rho)^{\gamma+1}\,d\rho 
\end{equation}
(see \eqref{f:first-K} and \eqref{f:second-K}).
Since the extreme terms in
\eqref{f:K-sandwich} coincide by the equality form of
\eqref{f:key-gamma}, both inequalities in \eqref{f:K-sandwich}
are equalities for almost every $r\in(0,R)$.

In particular, equality holds in \eqref{f:first-K}. Recalling the
definition of $J(r)$, this is equivalent to equality in
\eqref{f:I-ineq}, and hence in the inequality~\eqref{f:intermediate-radial} obtained in Step~2:
\begin{equation}\label{f:intermediate-radial-equality}
r^{m(n-1)}[-v'(r)]^{m(p-1)} = (\gamma+2)\eigeng(\Omega)
\int_0^r\rho^{m(n-1)} v(\rho)^{\gamma+1}[-v'(\rho)]^{1-m}\,d\rho
\end{equation}
for almost every $r\in(0,R)$.

On the other hand, Step~2 was obtained by combining
\eqref{f:Y-lower-r} and \eqref{f:Y-upper-r}. Thus, for almost every
$r\in(0,R)$,
\[
\begin{split}
(n\omega_n)^m & r^{m(n-1)}[-v'(r)]^{m(p-1)}
\leq Y(v(r))
\\ & \leq
(\gamma+2)\eigeng(\Omega)(n\omega_n)^m
\int_0^r
\rho^{m(n-1)} v(\rho)^{\gamma+1}[-v'(\rho)]^{1-m}\,d\rho.
\end{split}
\]
By \eqref{f:intermediate-radial-equality}, the two extreme terms
coincide. Hence equality holds throughout this chain.

Since $v=z$ and $v'(r)<0$ for every $r\in(0,R)$,
the area-formula argument preceding \eqref{f:inverse}
ensures that all the level-set inequalities used
in the derivation of \eqref{f:Y-lower-r} hold at
$t=v(r)$ for almost every $r\in(0,R)$.
On this common full-measure set, equality in
\eqref{f:Y-lower-r} forces equality throughout
the chain \eqref{f:Y-perimeter}. In particular, 
\[
P(U_{v(r)})^{\gamma+2}
=
n^{\gamma+2}\omega_n^{(\gamma+2)/n}
\mu(v(r))^{(n-1)(\gamma+2)/n},
\]
and therefore
\begin{equation}\label{f:iso-equality-v}
P(U_{v(r)})
=
n\omega_n^{1/n}
\mu(v(r))^{(n-1)/n}
\end{equation}
for almost every $r\in(0,R)$.

Suppose now that $\gamma=p-2$. From
\eqref{f:endpoint-lower-r} and \eqref{f:endpoint-upper-r} we have
\[
n\omega_n r^{n-1}[-v'(r)]^{p-1}
\le
F(v(r))
=
p\lambda_{p,p-2}(\Omega)n\omega_n
\int_0^r
\rho^{n-1}v(\rho)^{p-1}\,d\rho
\]
for almost every $r\in(0,R)$. Since \eqref{f:key-gamma} is an
equality, the two extreme terms coincide. Hence equality holds in
\eqref{f:endpoint-lower-r}. Tracing this equality back through
\eqref{f:endpoint-lower}, we obtain
\[
P(U_{v(r)})^p
=
n^p\omega_n^{p/n}
\mu(v(r))^{p(n-1)/n}
\]
for almost every $r\in(0,R)$, and hence again
\eqref{f:iso-equality-v}.

Thus, in either case, \eqref{f:iso-equality-v} holds
outside a null set $N\subset(0,R)$.
Since $v=z$ is strictly decreasing and belongs to
$AC_{\rm loc}(0,R)$, it maps null sets into null
sets, by the Lusin property (N) applied on a
countable exhaustion of $(0,R)$. Hence $v(N)$
has measure zero. Moreover, $v$ maps $(0,R)$
bijectively onto $(0,1)$. Therefore,
\eqref{f:iso-equality} holds for almost every
$t\in(0,1)$. 

The equality characterization in the Euclidean isoperimetric inequality
for sets of finite perimeter then implies that $U_t$ agrees, up to a
set of Lebesgue measure zero, with a ball for almost every $t\in(0,1)$;
see \cite[Chapter~12]{Maggi}.
\end{proof}

\begin{proposition}[Rigidity of the domain]\label{p:rigidity-domain}
Under the assumption \eqref{f:equality-eigenvalues}, the domain
$\Omega$ is a ball.
\end{proposition}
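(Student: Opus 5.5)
The argument will derive rigidity of $\Omega$ from Proposition~\ref{p:rigidity-levels} by exploiting the nestedness of the superlevel sets $U_t$ and the continuity of $u$. By Proposition~\ref{p:rigidity-levels}, there is a full-measure set $T\subset(0,1)$ such that for every $t\in T$ there is a ball $B_t=B_{\rho(t)}(c(t))$ with $|U_t\,\triangle\, B_t|=0$. Since $\mu$ is strictly decreasing and $|B_t|=\mu(t)$, we have $\rho(t)=(\mu(t)/\omega_n)^{1/n}$, which is strictly decreasing in $t$ and tends to $R$ as $t\downarrow0$ along $T$. Indeed, $\mu(t)\to|\Omega|$ as $t\downarrow 0$, by continuity from below, because $\bigcup_t U_t=\Omega$.

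The first step is to upgrade the a.e.\ equality to exact inclusions between open sets. Since $U_t$ is open, $|U_t\setminus B_t|=0$ forces $U_t\subset\overline{B_t}$, since an open set minus a closed ball with null measure must be empty. Conversely, each point of the open ball $B_t$ is a density point of $U_t$. For $s<t$ we have $U_t\subset U_s$, hence $B_t\subset \overline{B_s}$ up to null sets, and so $B_t\subset\overline{B_s}$ as balls, i.e.\ $|c(t)-c(s)|\le\rho(s)-\rho(t)$. This gives nested closed balls whose centers form a Cauchy net as $t\downarrow0$, since $\rho(s)-\rho(t)\to0$ when both tend to $0$. Hence $c(t)\to c_0$, and the balls $B_t$ increase to the open ball $B_R(c_0)$.

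Next I will identify $\Omega$ with $B_R(c_0)$. Every $x\in\Omega$ has $u(x)>0$, so $x\in U_t$ for some $t\in T$ with $t<u(x)$. Then $x\in\overline{B_t}\subset\overline{B_R(c_0)}$, which gives $\Omega\subset\overline{B_R(c_0)}$. Conversely, $\bigcup_{t\in T}B_t=B_R(c_0)$. Each $B_t$ is contained, up to a null set, in $U_t\subset\Omega$, so $|B_R(c_0)\setminus\Omega|=0$. We also know $|\Omega|=|B_R(c_0)|$.

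The main obstacle is removing the null-set ambiguity at the boundary: the conditions so far allow $\Omega$ to be $B_R(c_0)$ minus a closed null set, or to contain boundary points of the ball. Here I will use that $\Omega$ is open and Lipschitz. Being open, $\Omega\subset\overline{B_R(c_0)}$ implies $\Omega\subset B_R(c_0)$, the interior. If $K:=B_R(c_0)\setminus\Omega$ were nonempty, it would be relatively closed in $B_R(c_0)$ and null. Take $x\in K$ and a point of $\Omega$ nearby. Then $x\in\partial\Omega$, yet near $x$ the complement of $\Omega$ has measure zero. This contradicts the Lipschitz property: near a boundary point, a Lipschitz domain's complement contains an open cone, which has positive measure. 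Alternatively, the exterior sphere condition gives a ball outside $\Omega$ touching $x$, which again has positive measure. Hence $\Omega=B_R(c_0)$, a ball, and this completes the rigidity part of Theorem~\ref{t:FK-gamma}.
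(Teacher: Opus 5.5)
Your proposal is correct and follows essentially the same route as the paper. In both arguments, Proposition~\ref{p:rigidity-levels} supplies nested a.e.-balls, the center estimate $|c(t)-c(s)|\le\rho(s)-\rho(t)$ yields a Cauchy limit $c_0$ and the limit ball $B_R(c_0)$, and the Lipschitz regularity of $\Omega$ then removes the null-set ambiguity. The only cosmetic difference is that the paper concludes via density-one points, while you use openness plus the exterior cone (or exterior sphere) at a putative boundary point inside the ball.
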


\begin{proof}
Choose a sequence $t_j\downarrow0$ such that \eqref{f:iso-equality}
holds for every $j$. Since
\[
U_{t_j}\uparrow\{u>0\}=\Omega,
\]
we also have
$
|U_{t_j}|\uparrow|\Omega|.
$
By Proposition~\ref{p:rigidity-levels}, for each $j$ there exist
$c_j\in\R^n$ and $r_j>0$ such that
\[
U_{t_j}=B(c_j,r_j)
\qquad\text{up to a null set},
\]
and
\[
\omega_n r_j^n=|U_{t_j}|.
\]

Since the superlevel sets are nested, for $k>j$ the corresponding balls
satisfy
\[
B(c_j,r_j)\subset B(c_k,r_k).
\]
Indeed, the inclusion holds up to a null set, and any failure of the
actual inclusion between two open balls would produce a set of positive
measure. Hence
\[
|c_k-c_j|\le r_k-r_j.
\]

Since $|U_{t_j}|\uparrow|\Omega|=\omega_nR^n$, we have $r_j\uparrow R$.
It follows that $(c_j)$ is a Cauchy sequence; let $c_j\to c$. The
increasing union of the balls $B(c_j,r_j)$ then agrees with $B(c,R)$.
On the other hand, positivity of the principal eigenfunction gives
\[
\Omega=\{u>0\}=\bigcup_{j=1}^\infty U_{t_j}.
\]
Therefore $\Omega$ agrees almost everywhere with $B(c,R)$.
Since $\Omega$ is a Lipschitz domain, every point of
$\partial\Omega$ has positive upper density of both $\Omega$
and its complement. Consequently, the set of points of
density one of $\Omega$ is precisely $\Omega$.
The same property holds for the ball $B(c,R)$.
Since sets that agree almost everywhere have the same
density-one points, we conclude that
$\Omega=B(c,R)$. 
\end{proof}

\bigskip\bigskip

\section*{
Conflict of interest and data availability.} 
The authors declare that they have no conflict of interest.
No datasets were generated or analyzed during the current study.


\bigskip\bigskip

\begin{thebibliography}{99}


\bibitem{AlcantaraSantos2026}
C.~Alcantara and M.~S.~Santos,
\emph{Regularity for the normalized $p$-Laplacian equation with an arbitrary degeneracy law},
Adv. Calc. Var. \textbf{19} (2026), no.~2, 271--288.

\bibitem{AlcantaraSantosUrbano2026}
C.~Alcantara, M.~Santos and J.~M.~Urbano,
\emph{Gradient regularity for a class of singular or degenerate elliptic equations},
J. Differential Equations \textbf{458} (2026), 114040.

\bibitem{AllegrettoHuang1998}
W.~Allegretto and Y.~X.~Huang,
A Picone's identity for the $p$-Laplacian and applications,
\emph{Nonlinear Anal.} \textbf{32} (1998), no.~7,
819--830.

\bibitem{AttouchiRuosteenoja}
A.~Attouchi and E.~Ruosteenoja,
\emph{Remarks on regularity for $p$-Laplacian type equations in non-divergence form},
J. Differential Equations \textbf{265} (2018), no.~5, 1922--1961.

\bibitem{Bhattacharya1999}
T.~Bhattacharya,
\emph{A proof of the Faber--Krahn inequality for the first eigenvalue of the $p$-Laplacian},
Ann. Mat. Pura Appl. (4) \textbf{177} (1999), 225--240.

\bibitem{BirindelliDemengel2007}
I.~Birindelli and F.~Demengel,
\emph{Eigenvalue, maximum principle and regularity for fully non linear homogeneous operators},
Commun. Pure Appl. Anal. \textbf{6} (2007), no.~2, 335--366.

\bibitem{BirindelliDemengel2009}
I.~Birindelli and F.~Demengel,
\emph{Eigenvalue and Dirichlet problem for fully-nonlinear operators in non-smooth domains},
J. Math. Anal. Appl. \textbf{352} (2009), no.~2, 822--835.

\bibitem{BirindelliLeoni2014}
I.~Birindelli and F.~Leoni,
\emph{Symmetry minimizes the principal eigenvalue:
an example for the Pucci's sup operator},
Math. Res. Lett. \textbf{21} (2014), no.~5,
953--967.

\bibitem{BucurDaners2010}
D.~Bucur and D.~Daners,
\emph{An alternative approach to the Faber--Krahn inequality for Robin problems},
Calc. Var. Partial Differential Equations \textbf{37} (2010), 75--86.

\bibitem{CrandallIshiiLions}
M.~G.~Crandall, H.~Ishii and P.-L.~Lions,
\emph{User's guide to viscosity solutions of second order partial differential equations},
Bull. Amer. Math. Soc. (N.S.) \textbf{27} (1992), no.~1, 1--67.

\bibitem{CrastaFragala2020}
G.~Crasta and I.~Fragal\`a,
\emph{The Brunn--Minkowski inequality for the principal eigenvalue of fully nonlinear homogeneous elliptic operators},
Adv. Math. \textbf{359} (2020), 106855.

\bibitem{CrastaFragalaKawohl}
G.~Crasta, I.~Fragal\`a and B.~Kawohl,
\emph{On the first eigenvalue of the normalized $p$-Laplacian},
Proc. Amer. Math. Soc. \textbf{148} (2020), 577--590.

\bibitem{Daners2006}
D.~Daners,
\emph{A Faber--Krahn inequality for Robin problems in any space dimension},
Math. Ann. \textbf{335} (2006), 767--785.

\bibitem{Daners2011}
D.~Daners,
\emph{Krahn's proof of the Rayleigh conjecture revisited},
Arch. Math. (Basel) \textbf{96} (2011), no.~2, 187--199.

\bibitem{EmbrechtsHofert}
P.~Embrechts and M.~Hofert,
\emph{A note on generalized inverses},
Math. Methods Oper. Res. \textbf{77} (2013), no.~3, 423--432.



\bibitem{Faber1923}
G.~Faber,
\emph{Beweis dass unter allen homogenen Membranen von gleicher Fl\"ache
und gleicher Spannung die kreisf\"ormige den tiefsten Grundton gibt},
Sitzungsber. Bayer. Akad. Wiss. (1923), 169--172.

\bibitem{HamelNadirashviliRuss2007}
F.~Hamel, N.~Nadirashvili and E.~Russ,
\emph{Some isoperimetric problems for the principal eigenvalues of second-order elliptic operators in divergence form},
C. R. Math. Acad. Sci. Paris \textbf{344} (2007), no.~3, 169--174.

\bibitem{HamelNadirashviliRuss2011}
F.~Hamel, N.~Nadirashvili and E.~Russ,
\emph{Rearrangement inequalities and applications to isoperimetric problems for eigenvalues},
Ann. of Math. (2) \textbf{174} (2011), no.~2, 647--755.

\bibitem{ImbertSilvestre2013}
C.~Imbert and L.~Silvestre,
\emph{$C^{1,\alpha}$ regularity of solutions of some degenerate fully non-linear elliptic equations},
Adv. Math. \textbf{233} (2013), 196--206.


\bibitem{JulinJuutinen2012}
V.~Julin and P.~Juutinen,
\emph{A new proof for the equivalence of weak and viscosity solutions for the $p$-Laplace equation},
Commun. Partial Differential Equations \textbf{37} (2012), no.~5, 934--946.


\bibitem{Kawohl}
B.~Kawohl,
\emph{Rearrangements and Convexity of Level Sets in PDE},
Lecture Notes in Mathematics, vol.~1150, Springer-Verlag, Berlin, 1985.

\bibitem{KawohlFridman}
B.~Kawohl and V.~Fridman,
\emph{Isoperimetric estimates for the first eigenvalue of the $p$-Laplace operator and the Cheeger constant},
Comment. Math. Univ. Carolin. \textbf{44} (2003), no.~4, 659--667.

\bibitem{KawohlNormalized}
B.~Kawohl,
\emph{How I met the normalized $p$-Laplacian $\Delta_p^N$ and what we know now about mean values and concavity properties},
J. Elliptic Parabol. Equ. \textbf{6} (2020), 113--121.

\bibitem{Krahn1925}
E.~Krahn,
\emph{\"Uber eine von Rayleigh formulierte Minimaleigenschaft des Kreises},
Math. Ann. \textbf{94} (1925), 97--100.

\bibitem{Krahn1926}
E.~Krahn,
\emph{\"Uber Minimaleigenschaften der Kugel in drei und mehr Dimensionen},
Acta Comm. Univ. Tartu (Dorpat) A \textbf{9} (1926), 1--44.

\bibitem{Maggi}
F.~Maggi,
\emph{Sets of Finite Perimeter and Geometric Variational Problems},
Cambridge Studies in Advanced Mathematics, vol.~135, Cambridge University Press, Cambridge, 2012.

\bibitem{ManfrediParviainenRossi}
J.~J.~Manfredi, M.~Parviainen and J.~D.~Rossi,
\emph{An asymptotic mean value characterization for $p$-harmonic functions},
Proc. Amer. Math. Soc. \textbf{138} (2010), no.~3, 881--889.

\bibitem{MedinaOchoa}
M.~Medina and P.~Ochoa,
\emph{On viscosity and weak solutions for non-homogeneous $p$-Laplace equations},
Adv. Nonlinear Anal. \textbf{8} (2019), no.~1, 468--481.

\bibitem{PeresSheffield}
Y.~Peres and S.~Sheffield,
\emph{Tug-of-war with noise: a game-theoretic view of the $p$-Laplacian},
Duke Math. J. \textbf{145} (2008), no.~1, 91--120.

\bibitem{Talenti1976}
G.~Talenti,
\emph{Best constant in Sobolev inequality},
Ann. Mat. Pura Appl. (4) \textbf{110} (1976), 353--372.


\bibitem{WangYinJiang2025}
J.~Wang, Y.~Yin and F.~Jiang,
\emph{Regularity of solutions to degenerate normalized $p$-Laplacian equation with general variable exponents},
Potential Anal. \textbf{63} (2025), 1963--2000.

\end{thebibliography}
\end{document}